\newtheorem{theorem}{Theorem}[section]
\newtheorem{proposition}[theorem]{Proposition}
\newtheorem{corollary}[theorem]{Corollary}
\newtheorem{lemma}[theorem]{Lemma}
\theoremstyle{definition}
\newtheorem{definition}[theorem]{Definition}
\newtheorem{example}[theorem]{Example}
\newtheorem{remark}[theorem]{Remark}
\newtheorem*{acknowledgement}{Acknowledgement}
\newcommand{\PP}{\mathbb{P}}
\newcommand{\QQ}{\mathbb{Q}}
\newcommand{\CC}{\mathbb{C}}
\newcommand{\cO}{\mathcal{O} }
\newcommand{\cA}{\mathcal{A} }
\newcommand{\cI}{\mathcal{I} }
\newcommand{\cM}{\mathcal{M} }
\newcommand{\cS}{\mathcal{S} }
\newcommand{\proj}{\mathrm{Proj}\;}
\def\Mzn{\overline{\mathrm{M}}_{0,n} }
\def\Mzs{\overline{\mathrm{M}}_{0,6} }
\def\cMg{\overline{\cM}_{g} }
\def\Mt{\overline{\mathrm{M}}_{2} }
\def\cMt{\overline{\cM}_{2} }
\def\Mznpdd{\overline{\mathrm{M}}_{0,n}(\PP^{d}, d) }
\def\Vat{V_{A}^{2}}
\def\git{/\!/ }
\begin{document}

\title{Mori's program for $\overline{M}_{0,6}$ with symmetric divisors}
\date{\today}
\author{Han-Bom Moon}
\address{Department of Mathematics, Fordham University, Bronx, NY 10458}
\email{hmoon8@fordham.edu}

\begin{abstract}
We complete Mori's program with symmetric divisors for the moduli space of stable six pointed rational curves. As an application, we give an alternative proof of the complete Mori's program of the moduli space of genus two stable curves, done by Hassett.
\end{abstract}

\maketitle


\section{introduction}

Since Hassett and Hyeon initiated a study of birational geometry of moduli spaces in the viewpoint of Mori's program in \cite{Has05, HH09, HH08}, there have been a tremendous amount of results in this direction. \textbf{Mori's program} for a moduli space $M$ consists of: 1) Compute the cone of effective divisors of $M$. 2) For an effective $\QQ$-divisor $D$, find the birational model 
\[
	M(D) := \proj \bigoplus_{m \ge 0}H^{0}(M, \cO(mD)).
\]
3) Finally, study the moduli theoretic meaning of $M(D)$ and its relation with $M$. 

In this paper, we complete Mori's program with symmetric divisors for the moduli space $\Mzs$ of stable six pointed rational curves. On the effective cone of $\Mzn$, only the subcone generated by $K_{\Mzn}$ and $\psi_{i}$-classes has been studied intensively in \cite{Sim08, AS08, FS11, KM11, Moo11a}. One obstacle of the completion of Mori's full program for $\Mzn$ is that the cone of effective divisors is huge and unknown. As an initial step, we will focus on symmetric divisors, i.e., divisors which are invariant under the natural $S_{n}$-action on $\Mzn$. 

To state our result neatly, we use the interval notation from \cite{Che08b}. For two divisors $D_{1}$ and $D_{2}$, $(D_{1}, D_{2})$ is the set of divisors $aD_{1}+bD_{2}$ such that $a, b > 0$. $[D_{1}, D_{2}]$ is the set of divisors $aD_{1}+bD_{2}$ with $a, b \ge 0$. We can define $[D_{1}, D_{2})$ and $(D_{1},D_{2}]$ in a similar way. For the description of relevant divisor classes, see Definition \ref{def:divisorclasses}.

\begin{theorem}[Theorem \ref{thm:MoriprogramMzs}]\label{thm:intromainthm}
Let $D$ be a symmetric effective divisor on $\Mzs$. Then:
\begin{enumerate}
	\item If $D \in (-K_{\Mzs}, K_{\Mzs}+\frac{1}{3}\psi)$, 
	$\Mzs(D) \cong \Mzs$. 
	\item If $D \in [K_{\Mzs}+\frac{1}{3}\psi, B_{3})$, 
	$\Mzs(D) \cong 
	(\PP^{1})^{6}\git_{L} SL_{2}$ with the symmetric linearization $L$. 
	\item If $D \in (B_{2}, -K_{\Mzs}]$, $\Mzs(D)$ is 
	Veronese quotient 
	$\Vat$ with symmetric weight data $A = (\frac{1}{2}, \cdots, 
	\frac{1}{2})$. 
	\item Both $\Mzs(B_{2})$ and $\Mzs(B_{3})$ are a point. 
\end{enumerate}
\end{theorem}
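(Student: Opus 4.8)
The plan is to reduce the theorem to a computation of the symmetric Mori chamber decomposition inside the rank-two symmetric N\'eron--Severi space $N^1(\Mzs)^{S_6}$. Since $\Mzs$ is a Mori dream space, every symmetric effective divisor has a finitely generated section ring, and the effective cone subdivides into finitely many chambers on which the rational map to $\Mzs(D)$ is constant. I would first fix coordinates by taking $B_2$ and $B_3$ as generators and record the classes $\psi$, $K_{\Mzs}$, $-K_{\Mzs}$, and $K_{\Mzs}+\frac{1}{3}\psi$ in these coordinates using the standard expressions for the canonical and $\psi$ classes on $\overline{M}_{0,n}$ (Definition \ref{def:divisorclasses}). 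The first structural claim is that the symmetric effective cone is exactly the closed cone spanned by $B_2$ and $B_3$; this follows because each of these boundary classes is contracted by a morphism, hence rigid and extremal, and together they span $N^1(\Mzs)^{S_6}$.

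Second, I would pin down the symmetric nef cone and dispatch part (1). Because the nef cone of $\Mzs$ is cut out by its F-curves, the symmetric nef cone is determined by the finitely many symmetric F-curve classes, and a direct check of these inequalities gives $\mathrm{Nef}(\Mzs)^{S_6}=[-K_{\Mzs},\,K_{\Mzs}+\frac{1}{3}\psi]$. For $D$ in the interior $(-K_{\Mzs},\,K_{\Mzs}+\frac{1}{3}\psi)$ the divisor is ample, so $\Mzs(D)\cong\proj\bigoplus_{m} H^0(\Mzs,\cO(mD))\cong\Mzs$, which is part (1). The two edges $-K_{\Mzs}$ and $K_{\Mzs}+\frac{1}{3}\psi$ are then the boundary walls of the nef cone, each carrying a contraction of $\Mzs$.

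Third, I would identify the two boundary contractions and propagate them across the outer chambers to obtain parts (2) and (3). On the $K_{\Mzs}+\frac{1}{3}\psi$ side I would use the classical morphism $\phi\colon\Mzs\to(\PP^{1})^{6}\git_{L} SL_{2}$ and verify that $\phi^{*}(\text{ample})$ is proportional to $K_{\Mzs}+\frac{1}{3}\psi$ and that $\phi$ contracts precisely the $B_3$ boundary divisors; on the $-K_{\Mzs}$ side I would use the reduction morphism $\eta\colon\Mzs\to\Vat$ to the symmetric Veronese quotient and check that $\eta^{*}(\text{ample})$ is proportional to $-K_{\Mzs}$ with contracted locus $B_2$. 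Writing a class $D$ in the chamber $[K_{\Mzs}+\frac{1}{3}\psi,\,B_3)$ as $\phi^{*}A+eB_3$ with $A$ ample and $e\ge 0$, the projection formula together with $\phi_{*}\cO(meB_3)\cong\cO$ (valid since $B_3$ is $\phi$-exceptional) gives $H^{0}(\Mzs,\cO(mD))\cong H^{0}\big((\PP^{1})^{6}\git_{L} SL_{2},\cO(mA)\big)$ for all $m$, whence $\Mzs(D)\cong(\PP^{1})^{6}\git_{L} SL_{2}$; the identical argument with $\eta$ and $B_2$ yields $\Mzs(D)\cong\Vat$ on $(B_2,\,-K_{\Mzs}]$. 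Here one uses that both quotients have symmetric Picard rank one, so the pulled-back ample ray is a single wall and each outer chamber is spanned by that wall together with the relevant exceptional boundary class.

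Finally, part (4) drops out of the same analysis: $B_2$ and $B_3$ are the exceptional divisors of $\eta$ and $\phi$, hence rigid, so $H^{0}(\Mzs,\cO(mB_i))\cong H^{0}(\text{target},\cO)\cong\CC$ for all $m$, and $\Mzs(B_i)$ is a point. The step I expect to be the main obstacle is the third one: concretely realizing the two contraction morphisms and certifying both the proportionality of the pulled-back polarizations to $K_{\Mzs}+\frac{1}{3}\psi$ and $-K_{\Mzs}$ and the exact identification of their exceptional loci with $B_3$ and $B_2$. This requires the detailed birational geometry of the GIT and Veronese quotients rather than a formal cone computation, and it is what ultimately fixes the distinguished slope $\frac{1}{3}$ and glues the three chambers together.
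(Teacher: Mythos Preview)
Your proposal is correct and follows essentially the same route as the paper: compute the symmetric nef cone via F-curve intersections, identify the two edge contractions as the Kapranov map $\rho\colon\Mzs\to(\PP^{1})^{6}\git SL_{2}$ and the Veronese-quotient map $\varphi_{A}\colon\Mzs\to\Vat$, and then push sections through these contractions (the paper invokes \cite[Lemma~7.11]{Deb01}, which is exactly your projection-formula step with the exceptional divisor) to handle the outer chambers. One technical point you pass over on the $\Vat$ side: the assertion $\eta_{*}\cO(meB_{2})\cong\cO_{\Vat}$, and hence the conclusion $\Mzs(D)\cong\Vat$ rather than merely its normalization, requires $\Vat$ to be normal; the paper establishes this separately (Remark~\ref{rem:normal}) using the identification of $\Vat$ with the Igusa quartic, a complete intersection that is regular in codimension one, so Serre's criterion applies. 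Without that check your argument in part~(3) would a priori only yield the normalization.
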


It is interesting that both $(\PP^{1})^{6}\git_{L} SL_{2}$ and $\Vat$ are classically known varieties. $(\PP^{1})^{6}\git_{L}SL_{2}$ is isomorphic to \textbf{Segre cubic} $\cS_{3}$ (Remark \ref{rem:Segrecubic}) and $\Vat$ is isomorphic to \textbf{Igusa quartic} $\cI_{4}$ (Remark \ref{rem:Igusaquartic}) or \textbf{Castelnuovo-Richmond quartic}. Also $\cI_{4}$ is isomorphic to Satake compactification $\overline{\cA}_{2}(2)$ of the moduli space of principally polarized abelian surfaces with level two structures (\cite{Igu64}). Moreover, these two varieties are known to be projectively dual to each other (\cite[Remark I.3]{DO88}). 

All birational models appear here are classically known varieties with or without their modular interpretations. For example, see \cite[Section 9.4]{Dol12} and references therein. So this article is an interpretation of the relation between classically known varieties using a modern viewpoint of Mori's program. Also Item (1) and (2), which are on the direction towards canonical divisor, are proved in \cite{Sim08, FS11, AS08, KM11, Moo13}.

The author wants to point out a simple but important observation. As we can see in the definition of $\Vat$ in Section \ref{sec:veronesequotient}, the birational model $\mathcal{I}_{4}$ is not a moduli space of abstract pointed curves, but that of (equivalent classes of) configurations of points. In general to find birational models of $\Mzn$ in the direction towards the anti-canonical divisor, it is insufficient to study moduli spaces of rational curves with worse singularities. The reason is that all moduli spaces pointed rational curves (with allowing worse singularities) are contractions of $\Mzn$ (\cite{Smy13}), as opposed to the case of the moduli stack $\cMg$. In the viewpoint of symmetric Mori's program, $\Mzs$ we discuss here is very simple in the sense that there is no flip. Thus we can explain everything by using well-known contractions. But for $n \ge 7$, there must be several flips even for symmetric divisors. So to understand Mori's program for $\Mzn$ for larger $n$, we need to `find' completely new modular interpretations of birational models. 

As a quick application of our results, we give a complete description of Mori's program for $\cMt$. All smooth genus two curves are hyperelliptic, thus the coarse moduli space $\Mt$ of the moduli space of genus two stable curves is isomorphic to $\Mzs/S_{6}$ (\cite[Corollary 2.5]{AL02}). Therefore we can directly translate symmetric Mori's program, as Mori's program for $\Mt$ and that for $\cMt$. The investigation of Mori's program for $\cMt$ was done by Hassett in \cite{Has05}, as an initial step of Hassett-Keel program. It has been one of the most influential projects on the birational geometry of moduli spaces in the last several years. As a consequence of Theorem \ref{thm:intromainthm}, we give a different proof of Hassett's theorem (\cite[Theorem 4.10]{Has05}) for $\cMt$.

\begin{theorem}[Theorem \ref{thm:MoriprogramcMt}]\label{thm:introtheoremcMt}
Let $D$ be an effective divisor on $\cMt$. Then:
\begin{enumerate}
	\item If $D \in (\lambda, \delta_{0}+12\delta_{1})$, 
	$\cMt(D) \cong \Mt$.
	\item If $D \in [\delta_{0}+12\delta_{1}, \delta_{1})$, 
	$\cMt(D) \cong \PP^{6}\git SL_{2}$.
	\item If $D \in (\delta_{0}, \lambda]$, $\cMt(D)$ is the
	Satake compactification $\overline{\cA}_{2}^{\mathrm{Sat}}$ of 
	the moduli space $\cA_{2}$ of principally polarized abelian surfaces.
	\item Both $\cMt(\delta_{0})$ and $\cMt(\delta_{1})$ are a point.
\end{enumerate}
\end{theorem}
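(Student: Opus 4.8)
The plan is to translate the already-established symmetric Mori program for $\Mzs$ (Theorem~\ref{thm:intromainthm}) into the corresponding statement for $\cMt$ via the isomorphism $\Mt \cong \Mzs/S_{6}$. The whole proof should be a dictionary: identify how the symmetric divisor classes on $\Mzs$ correspond to the standard generators $\lambda, \delta_{0}, \delta_{1}$ of $\mathrm{Pic}(\cMt)_{\QQ}$, check that the symmetric cone structure and its subdivision carry over, and then read off the four birational models from the $\Mzs$ side.

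**The key steps, in order.** First I would set up the quotient map $q \colon \Mzs \to \Mzs/S_{6} \cong \Mt$ precisely, using \cite[Corollary 2.5]{AL02}, and recall that pullback $q^{*}$ gives an isomorphism from $\mathrm{Pic}(\cMt)_{\QQ}$ onto the symmetric (i.e. $S_{6}$-invariant) part of $\mathrm{Pic}(\Mzs)_{\QQ}$. Second, I would compute the explicit dictionary between the two sets of divisor classes: express $\lambda$, $\delta_{0}$, $\delta_{1}$ on $\cMt$ in terms of the pulled-back symmetric classes $\psi$, the boundary classes $B_{2}, B_{3}$ (from Definition~\ref{def:divisorclasses}), and $K_{\Mzs}$. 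This is the arithmetic heart of the translation: I would verify that, up to positive scaling, the boundary divisors $\delta_{0}$ and $\delta_{1}$ correspond to the two symmetric boundary rays $B_{2}$ and $B_{3}$, that $\lambda$ corresponds to $-K_{\Mzs}$ (since $\lambda$ is the ray giving the Satake/Igusa contraction), and that the interior canonical class $K_{\cMt}$ matches $K_{\Mzs}+\tfrac{1}{3}\psi$ after pullback. Third, since $q$ is finite, the section rings are preserved under taking $S_{6}$-invariants: $H^{0}(\cMt, \cO(mD)) = H^{0}(\Mzs, \cO(mq^{*}D))^{S_{6}}$, so $\cMt(D) \cong \Mzs(q^{*}D)$ whenever $q^{*}D$ is symmetric, which it always is. Fourth, I would match each of the four cases of Theorem~\ref{thm:intromainthm} to the four cases of the present theorem and identify the target models: $\Mzs \rightsquigarrow \Mt$ for case (1); $(\PP^{1})^{6}\git_{L} SL_{2} \cong \cS_{3}$, whose $S_{6}$-quotient is $\PP^{6}\git SL_{2}$, for case (2); and $\Vat \cong \cI_{4} \cong \overline{\cA}_{2}(2)$, whose $S_{6}$-quotient is the Satake compactification $\overline{\cA}_{2}^{\mathrm{Sat}}$, for case (3).

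**The main obstacle** I expect is the bookkeeping of the divisor dictionary in the second step: getting the exact proportionality constants right so that the open intervals and half-open intervals in Theorem~\ref{thm:intromainthm} map precisely onto those in the statement above (for instance, confirming that the chamber wall $K_{\Mzs}+\tfrac{1}{3}\psi$ pulls back to the ray through $\delta_{0}+12\delta_{1}$, and that the endpoints $B_{2}, B_{3}$ land exactly on the rays $\delta_{0}, \delta_{1}$). A secondary point requiring care is the GIT compatibility in step four: I must check that taking the $S_{6}$-quotient commutes with the two GIT constructions, i.e.\ that $\bigl((\PP^{1})^{6}\git_{L} SL_{2}\bigr)/S_{6} \cong \PP^{6}\git SL_{2}$ and that the $S_{6}$-quotient of the Veronese/Igusa model is the Satake compactification of $\cA_{2}$ rather than of some level cover. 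Once these identifications are nailed down, the rest of the proof is a direct transcription, with no new geometry needed beyond what Theorem~\ref{thm:intromainthm} already provides.
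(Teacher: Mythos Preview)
Your approach is essentially the paper's: reduce to Theorem~\ref{thm:intromainthm} via the $S_{6}$-quotient $\pi\colon\Mzs\to\Mt$ together with the identification $\mathrm{Pic}(\cMt)_{\QQ}\cong\mathrm{Pic}(\Mzs)_{\QQ}^{S_{6}}$, compute the divisor dictionary, and take the $S_{6}$-quotient of each birational model. Two corrections are needed. First, in your step~3 you write $\cMt(D)\cong\Mzs(q^{*}D)$, but taking $S_{6}$-invariants of the section ring gives $\cMt(D)\cong\Mzs(\pi^{*}D)/S_{6}$, which is what you (correctly) use in step~4; the paper isolates this as a short lemma. Second, your expectation that $K_{\cMt}$ pulls back to the ray through $K_{\Mzs}+\tfrac{1}{3}\psi$ is false: the wall class on $\cMt$ is $\delta_{0}+12\delta_{1}$ (coming from $\Delta_{0}+6\Delta_{1}$ on $\Mt$, since $\pi^{*}\Delta_{0}=2B_{2}$ and $\pi^{*}\Delta_{1}=B_{3}$), and this is not proportional to $K_{\cMt}=13\lambda-2\delta$. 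One further difference in case~(3): rather than showing $\cI_{4}/S_{6}\cong\overline{\cA}_{2}^{\mathrm{Sat}}$ directly, the paper invokes the standard fact that $\Mt(\lambda)=\overline{\cA}_{2}^{\mathrm{Sat}}$ and then observes that $\Mt(D)$ is constant on the whole chamber $(\Delta_{0},\lambda]$; this sidesteps the need to identify the $S_{6}$-action on $\overline{\cA}_{2}(2)$ with the level-$2$ deck group.
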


We can summarize Mori's program for $\Mzs$ and $\cMt$ with Figure \ref{fig:Moriprogram}. The diagonal maps are divisorial contractions (contracted divisors are indicated on arrows) and vertical maps are $S_{6}$-quotient. 

\begin{figure}[!ht]
\[
	\xymatrix{&\Mzs\ar_{B_{2}}[ld]\ar[dd]\ar^{B_{3}}[rd]\\
	\cI_{4} = \overline{\cA}_{2}(2) = \Vat\ar[dd]&&
	(\PP^{1})^{6}\git_{L}SL_{2} = \cS_{3} \ar[dd]\\
	& \Mt \ar_{\Delta_{0}}[ld]\ar^{\Delta_{1}}[rd]\\
	\overline{\cA}_{2}^{\mathrm{Sat}} && \PP^{6}\git SL_{2}}
\]
\caption{Mori's program for $\Mzs$ and $\cMt$}\label{fig:Moriprogram}
\end{figure}

After the author finish the preparation of this manuscript, he noticed that Lange and Ortega recently ran the log minimal model program for the moduli space of even spin curves of genus 2 in \cite{LO13}. One can regard the main result of this paper as a theorem parallel to \cite[Theorem 2]{LO13} with additional monodromies. 

This paper is organized as follows. In Section \ref{sec:divcurve}, we review basic facts about divisor and curve classes on $\Mzn$. Section \ref{sec:veronesequotient} is for the background about Veronese quotients. We will study the geometry of a particular Veronese quotient $\Vat$ with symmetric weight data $A$ in Section \ref{sec:explicitcomputation}. By using them, we prove Theorem \ref{thm:intromainthm} on Section \ref{sec:proofmaththm}. Section \ref{sec:MoriprogramcMt} is a proof of Theorem \ref{thm:introtheoremcMt}.

We will work over an algebraically closed field of characteristic 0. 

\begin{acknowledgement}
The author would like to thank Angela Gibney for thorough and helpful comments on an earlier draft of this paper. Also he thanks Igor Dolgachev for indicating some references. 
\end{acknowledgement}

\section{Divisors and curves on $\Mzn$}\label{sec:divcurve}

We begin by reviewing general facts about divisors and curves on $\Mzn$. The moduli space $\Mzn$ of stable $n$-pointed rational curves is a smooth projective variety of dimension $n-3$ with a natural $S_{n}$-action permuting marked points. A divisor $D$ on $\Mzn$ is called symmetric if it is invariant under the $S_{n}$ action. The Neron-Severi vector space $N^{1}(\Mzn)$ has dimension $2^{n-1} - {n \choose 2} - 1$, but its $S_{n}$-invariant part $N^{1}(\Mzn)^{S_{n}} \cong N^{1}(\Mzn/S_{n})$ is $\lfloor n/2\rfloor - 1$ dimensional (\cite[Theorem 1.3]{KM96}). The following is a list of natural symmetric divisors on $\Mzn$. 

\begin{definition}\label{def:divisorclasses}
\begin{enumerate}
	\item For $i = 2, 3, \cdots, n-2$, 
	let $B_{i}$ be the closure of the locus of 
	curves $C$ with two irreducible components $C_{1}$ and $C_{2}$
	such that 
	$C_{1}$ (resp. $C_{2}$) contains $i$ (resp. $n-i$) marked points.
	$B_{i}$ is called a symmetric boundary divisor. 
	By definition, $B_{i} = B_{n-i}$.
	Let $B = \sum_{i=2}^{\lfloor n/2 \rfloor}B_{i}$ be the total boundary 
	divisor. 
	\item Fix $1 \le i \le n$. 
	Let $\mathbb{L}_{i}$ be a line bundle on $\Mzn$ such that 
	over $(C, x_{1}, \cdots, x_{n}) \in \Mzn$, the fiber is 
	$\Omega_{C, x_{i}}$, the cotangent space of $C$ at $x_{i}$. 
	Let $\psi_{i} = c_{1}(\mathbb{L}_{i})$. 
	If we denote $\psi = \sum_{i=1}^{n}\psi_{i}$, then $\psi$ is an 
	$S_{n}$-invariant divisor. 
	\item Let $K_{\Mzn}$ be the canonical divisor of $\Mzn$. 
\end{enumerate}
\end{definition}

The symmetric effective cone $\mathrm{Eff}(\Mzn)^{S_{n}} \cong \mathrm{Eff}(\Mzn/S_{n})$ is generated by symmetric boundary divisors (\cite[Theorem 1.3]{KM96}). Thus we can write down $K_{\Mzn}$ and $\psi$ as nonnegative linear combinations of boundary divisors. 

\begin{lemma}\cite[Proposition 2]{Pan97}, \cite[Lemma 2.9]{Moo11a}
On $N^{1}(\Mzn)$, the following relations hold.
\begin{enumerate}
	\item 
	$\displaystyle K_{\Mzn} = \sum_{i=2}^{\lfloor n/2\rfloor}
	\left(\frac{i(n-i)}{n-1} - 2\right)B_{i}$.
	\item $\displaystyle \psi = K_{\Mzn} + 2B$.
\end{enumerate}
\end{lemma}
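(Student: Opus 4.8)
The plan is to prove relation (2) first, by a standard adjunction computation, and then to obtain relation (1) from it by subtracting an explicit symmetric boundary expansion of $\psi$. Throughout, for a subset $S \subseteq \{1, \ldots, n\}$ with $2 \le |S| \le n-2$ I write $\delta_{S}$ for the boundary divisor whose generic point is a two-component curve separating the markings in $S$ from those in $S^{c}$; thus $\delta_{S} = \delta_{S^{c}}$, and $B_{i} = \sum_{|S| = i} \delta_{S}$ with each divisor counted once. Summing over $2 \le i \le \lfloor n/2 \rfloor$ lists every boundary divisor exactly once, so the total boundary $\delta := \sum_{S} \delta_{S}$ equals $B$.

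To prove (2), I would run the universal-curve recursion. Let $\pi \colon \overline{\mathrm{M}}_{0,n+1} \to \Mzn$ be the map forgetting the last marked point, realizing $\overline{\mathrm{M}}_{0,n+1}$ as the universal curve. Its relative dualizing sheaf satisfies $\omega_{\pi} = \psi_{n+1} - \sum_{i=1}^{n} \delta_{\{i, n+1\}}$, and $K_{\overline{\mathrm{M}}_{0,n+1}} = \pi^{*} K_{\Mzn} + \omega_{\pi}$. Tracking how the $\psi$-classes and boundary classes pull back under $\pi$ and inducting on $n$, with base case $\overline{\mathrm{M}}_{0,4} \cong \PP^{1}$, yields the genus-zero canonical formula $K_{\Mzn} = \psi - 2\delta$. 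Rewriting $\delta = B$ gives $\psi = K_{\Mzn} + 2B$, which is (2). Alternatively one may simply invoke this as the $\lambda = 0$ specialization of Mumford's formula on $\overline{M}_{g,n}$.

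It remains to expand $\psi$ in the symmetric boundary basis, after which (1) follows by subtracting $2B$ via (2). I would start from the standard expression of a single $\psi$-class in boundary divisors: fixing the two indices $2, 3$, one has $\psi_{1} = \sum_{S \ni 1,\; 2, 3 \notin S} \delta_{S}$. Averaging this identity over the $S_{n}$-action sends the left side to $\frac{1}{n}\psi$, since $\sum_{\sigma \in S_{n}} \sigma^{*}\psi_{1} = (n-1)!\, \psi$. On the right side, a direct count shows that the averaged coefficient of a fixed divisor $\delta_{S}$ with $|S| = i$ equals $\frac{i(n-i)}{n(n-1)}$, independently of the representative; equivalently $\psi = \sum_{i=2}^{\lfloor n/2 \rfloor} \frac{i(n-i)}{n-1} B_{i}$. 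Subtracting $2B$ and using (2) then gives $K_{\Mzn} = \sum_{i=2}^{\lfloor n/2\rfloor}\left( \frac{i(n-i)}{n-1} - 2 \right) B_{i}$, which is (1).

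The main obstacle I anticipate is the combinatorics of the symmetrization step. For a fixed partition $\{T, T^{c}\}$ with $|T| = i$, one counts the pairs $(\sigma, S_{0})$ with $\sigma(S_{0}) \in \{T, T^{c}\}$ and $S_{0}$ admissible, namely $1 \in S_{0}$ and $2, 3 \notin S_{0}$; this gives the normalized coefficient $\frac{i!\,(n-i)!}{n!}\big(\binom{n-3}{i-1} + \binom{n-3}{i-2}\big)$, which collapses to $\frac{i(n-i)}{n(n-1)}$ once one applies Pascal's rule $\binom{n-3}{i-1} + \binom{n-3}{i-2} = \binom{n-2}{i-1}$. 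One must separately handle the balanced case $i = n/2$, where each partition is represented twice in $\sum_{|S|=i}\delta_{S}$, and verify that the extra factor cancels. The other technical point to confirm is the single-$\psi$ boundary formula itself, where the choice of the two fixed indices enters; granting that input, the remainder is bookkeeping.
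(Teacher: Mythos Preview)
The paper does not actually prove this lemma; it is stated with citations to \cite{Pan97} and \cite{Moo11a} and used as a black box. Your argument is correct and supplies the details those references contain: relation (2) is the genus-zero case of the standard canonical-class formula $K = \psi - 2\delta$, and your symmetrization of the single-$\psi$ boundary expression $\psi_{1} = \sum_{1\in S,\;2,3\notin S}\delta_{S}$ is carried out correctly, including the Pascal-rule collapse $\binom{n-3}{i-1}+\binom{n-3}{i-2}=\binom{n-2}{i-1}$ giving the coefficient $\frac{i(n-i)}{n-1}$ of $B_{i}$ in $\psi$, from which (1) follows by subtracting $2B$. The only cosmetic point is that in your last paragraph the balanced case $i=n/2$ needs no special treatment once you have fixed the convention that $B_{i}$ counts each partition once, since the symmetrization computes the coefficient of the \emph{partition} $\{T,T^{c}\}$ directly; the ``extra factor'' you mention simply does not arise.
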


Now we move to curve classes on $\Mzn$. Let $S_{1} \sqcup S_{2} \sqcup S_{3} \sqcup S_{4} = \{1,2, \cdots, n\}$ be a partition. Let $F(S_{1}, S_{2}, S_{3}, S_{4})$ be an F-curve class corresponding to the partition (\cite[Section 4]{KM96}).

\begin{lemma}\cite[Corollary 4.4]{KM96}
Let $F = F(S_{1}, S_{2}, S_{3}, S_{4})$ be an F-curve and let $a_{j} = |S_{j}|$. Then 
\[
	F \cdot \sum r_{i} B_{i} = - r_{a_{1}}-r_{a_{2}}-r_{a_{3}}-r_{a_{4}}
	+ r_{a_{1}+a_{2}} + r_{a_{1}+a_{3}}+r_{a_{1}+a_{4}}
\]
if we define $r_{1} = 0$ and $r_{a+b} = r_{n-a-b}$.
\end{lemma}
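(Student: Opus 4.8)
The plan is to realize the F-curve $F = F(S_1,S_2,S_3,S_4)$ as the image of a map $\iota \colon \overline{\mathrm{M}}_{0,4} \to \Mzn$ and to compute the restriction of each boundary divisor. Fix, for every index $j$ with $a_j = |S_j| \ge 2$, a point of $\overline{\mathrm{M}}_{0,|S_j|+1}$ representing a single smooth rational tail carrying the markings $S_j$ together with one attaching point; these tails are kept rigid. Sending a $4$-pointed curve $(P,q_1,q_2,q_3,q_4)$ to the curve obtained by gluing the $j$-th tail to $P$ at $q_j$ defines $\iota$, and $F = \iota_*[\overline{\mathrm{M}}_{0,4}]$. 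For a subset $T$ with $2 \le |T| \le n-2$ let $\delta_T = \delta_{T^c}$ denote the boundary divisor whose generic point has a node separating $T$ from its complement, so that $B_i = \sum_{|T|=i}\delta_T$ and hence $\sum_i r_i B_i = \sum_T r_{|T|}\delta_T$, the right-hand sum running over boundary divisors (unordered complementary pairs); here the convention $r_{a+b}=r_{n-a-b}$ makes $r_{|T|}=r_{|T^c|}$ well defined. By linearity it then suffices to compute $F\cdot\delta_T$ for every $T$.

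Next I would classify the boundary divisors by how $T$ meets the partition. Because the tails are rigid smooth rational curves, the only nodes that ever occur along $F$ are the four nodes joining the tails to the spine $P$ and the single node produced at each of the three special points of $\overline{\mathrm{M}}_{0,4}$; consequently $\delta_T$ restricts nontrivially only when $T$ is a union of some of the parts $S_j$. This yields three cases. (i) If $T$ meets some $S_j$ without containing it, then no fibre of $\iota$ has a node of type $T$, so $F$ is disjoint from $\delta_T$ and $F\cdot\delta_T=0$. (ii) If $T = S_i \cup S_j$ is a union of two parts, the node of type $T$ appears at exactly one of the three boundary points of $\overline{\mathrm{M}}_{0,4}$, where $\iota$ meets $\delta_T$ transversally, so $F\cdot\delta_T = 1$; the three unordered pairings give the three divisors $\delta_{S_1\cup S_2}$, $\delta_{S_1\cup S_3}$, $\delta_{S_1\cup S_4}$. (iii) If $T = S_j$ (equivalently $T = S_j^c$, a union of the other three parts), necessarily with $a_j \ge 2$, the separating node persists on every fibre, so $F$ is \emph{contained} in $\delta_{S_j}$ and the intersection number must be read off from the normal bundle.

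The heart of the computation, and the step I expect to be the main obstacle, is case (iii). Here I would invoke the identification of the boundary divisor with a product $\overline{\mathrm{M}}_{0,|S_j|+1}\times\overline{\mathrm{M}}_{0,n-|S_j|+1}$ and the formula $c_1(N_{\delta_{S_j}}) = -\psi_\bullet - \psi_\star$ for its normal bundle, where $\psi_\bullet,\psi_\star$ are the cotangent classes at the two branches of the node. Restricting to $F$, the tail factor is the fixed point we chose, so $\psi_\bullet|_F = 0$, while the spine branch $\star$ is the $j$-th marked point of the varying $4$-pointed curve, so $\psi_\star|_F$ is the class $\psi_j$ on $\overline{\mathrm{M}}_{0,4}$, of degree $1$. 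Hence $F\cdot\delta_{S_j} = -(0+1) = -1$. Pinning down the sign and correctly matching the two branches is the delicate point; the rest is bookkeeping.

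Finally I would assemble everything through $F\cdot\sum_i r_i B_i = \sum_T r_{|T|}(F\cdot\delta_T)$. Cases (i)--(iii) contribute exactly the four single-part terms $-r_{a_1}-r_{a_2}-r_{a_3}-r_{a_4}$ and the three paired terms $r_{a_1+a_2}+r_{a_1+a_3}+r_{a_1+a_4}$, which is the asserted formula. The two conventions then fit automatically: $r_1 = 0$ kills the contribution of any singleton part $S_j$ (for which $\delta_{S_j}$ is not a divisor at all), and $r_{a+b}=r_{n-a-b}$ records the identification $\delta_{S_i\cup S_j} = \delta_{S_k\cup S_l}$ of complementary pairings, so that no pairing is double counted.
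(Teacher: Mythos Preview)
Your argument is correct and follows the standard proof: realise the F-curve as the image of $\overline{\mathrm{M}}_{0,4}$, sort the boundary divisors $\delta_T$ by how $T$ meets the partition, and in the one case where $F\subset\delta_{S_j}$ read off the self-intersection from the normal bundle formula $c_1(N_{\delta_{S_j}})=-\psi_\bullet-\psi_\star$.  The paper itself gives no proof of this lemma; it simply quotes the result from \cite[Corollary~4.4]{KM96}, whose proof is essentially the computation you have written out.
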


We need to know another curve class $C_{j}$ (see \cite[Lemma 4.8]{KM96}). Fix a $j$-pointed $\PP^{1}$. And let $x$ be a moving point on $\PP^{1}$. By gluing fixed $n-j+1$ pointed $\PP^{1}$ whose last marked point is $y$ to the $j$-pointed $\PP^{1}$ along $x$ and $y$ and stabilizing it, we obtain an one parameter family of $n$-pointed stable curves over $\PP^{1}$, i.e., a curve $C_{j} \cong \PP^{1}$ on $\Mzn$.

\begin{lemma}\cite[Lemma 4.8]{KM96}
\[
	C_{j}\cdot B_{i} = \begin{cases}
	j, & i = j-1,\\ -(j-2), & i = j,\\ 0, & \mbox{otherwise.}
	\end{cases}
\]
\end{lemma}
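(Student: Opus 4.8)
The plan is to compute the intersection numbers directly from the explicit one-parameter family defining $C_{j}$. First I would fix notation: write $S = \{1, \dots, j\}$ for the marked points carried by the moving component, so that the generic fiber of $C_{j}$ is a two-component curve lying in the boundary divisor $\Delta_{S} \cong \overline{\mathrm{M}}_{0,j+1} \times \overline{\mathrm{M}}_{0,n-j+1}$, where the first factor records $(C_{1}; p_{1}, \dots, p_{j}, \bullet)$ with the node $\bullet = x$, and the second factor records the fixed curve $C_{2}$ with its $n-j$ marked points and node $\star$. Thus $C_{j} \subset \Delta_{S}$; its image in the second factor is a single point (that curve is fixed), and its image in the first factor is the curve $C'$ obtained by fixing $p_{1}, \dots, p_{j}$ and letting $\bullet$ vary over $\PP^{1}$.

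Next I would locate every special fiber. The family degenerates exactly when the moving node $x$ collides with one of the $j$ fixed points $p_{i}$; stabilizing produces a chain $C_{2} - E - C_{1}$, where the bubble $E$ carries $p_{i}$ together with the two nodes. Reading off these nodes shows the fiber lies in $\Delta_{S} \cap \Delta_{T}$: the $C_{2}$–$E$ node is of type $B_{j}$ (it realizes the partition $\{S, S^{c}\}$, hence $\Delta_{S}$ again), while the $E$–$C_{1}$ node is of type $B_{j-1}$ (it separates the $j-1$ points $p_{i'}$ with $i'\neq i$ from the remaining $n-j+1$). Since these are the only degenerations, $C_{j}$ meets no $B_{i}$ with $i \neq j-1, j$, giving $C_{j}\cdot B_{i} = 0$ there. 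Each of the $j$ collisions contributes exactly one point of $B_{j-1}$; after checking that the smoothing parameter of the new node vanishes to first order in the family parameter, each crossing has multiplicity one, so $C_{j}\cdot B_{j-1} = j$.

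For $C_{j}\cdot B_{j}$ I would use that $C_{j}$ meets only the single component $\Delta_{S}$ of $B_{j}$, whence $C_{j}\cdot B_{j} = C_{j}\cdot \Delta_{S} = \deg\left(\mathcal{O}(\Delta_{S})|_{C_{j}}\right)$. Since boundary divisors on $\Mzn$ are smooth, this equals $\deg N_{\Delta_{S}}|_{C_{j}}$, and the standard self-intersection formula gives $c_{1}(N_{\Delta_{S}}) = -\psi_{\bullet} - \psi_{\star}$ on $\Delta_{S} \cong \overline{\mathrm{M}}_{0,j+1}\times\overline{\mathrm{M}}_{0,n-j+1}$. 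As the second factor is constant along $C_{j}$, the $\psi_{\star}$ term drops out and $C_{j}\cdot B_{j} = -\deg\left(\psi_{\bullet}|_{C'}\right)$. Finally I would evaluate $\deg(\psi_{\bullet}|_{C'})$: the curve $C'$ is the fiber of the forgetful map $\overline{\mathrm{M}}_{0,j+1}\to\overline{\mathrm{M}}_{0,j}$ over the point $(p_{1},\dots,p_{j})$, i.e. a copy of $\PP^{1}$, and comparing $\psi_{\bullet}$ with the relative dualizing sheaf via $\psi_{\bullet} = \omega_{\pi} + \sum_{i=1}^{j} D_{i\bullet}$ gives $\deg\psi_{\bullet}|_{C'} = -2 + j$. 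Hence $C_{j}\cdot B_{j} = -(j-2)$.

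I expect the self-intersection computation to be the main obstacle: it relies on the normal-bundle formula for boundary divisors, on correctly identifying the restriction of $\psi_{\bullet}$ to the moving-point curve $C'$, and on being sure that the passage of $C_{j}$ through the codimension-two strata at the collision fibers does not corrupt the count. The last point causes no trouble, since $\deg\left(\mathcal{O}(\Delta_{S})|_{C_{j}}\right)$ is computed globally on $C_{j}\cong\PP^{1}$ and so automatically incorporates the special fibers. The secondary point to pin down is the transversality and multiplicity-one nature of the $j$ crossings with $B_{j-1}$.
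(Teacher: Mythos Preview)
The paper does not prove this lemma at all; it is quoted verbatim from \cite[Lemma 4.8]{KM96} and used only through its numerical consequences (Corollary \ref{cor:intersection}). So there is no in-paper argument to compare against.

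That said, your reconstruction is correct and is essentially the standard proof. The identification of the $j$ collision fibers and of the two node types there is right, the transversality with the components $\Delta_{S\setminus\{i\}}$ of $B_{j-1}$ follows because the local smoothing parameter of the new node is a local coordinate on the base $\PP^{1}$, and the self-intersection with $\Delta_{S}$ is handled exactly as one should: via $c_{1}(N_{\Delta_{S}}) = -\psi_{\bullet}-\psi_{\star}$ together with $\psi_{\bullet} = \omega_{\pi}+\sum_{i}D_{i\bullet}$ on the universal curve $\overline{\mathrm{M}}_{0,j+1}\to\overline{\mathrm{M}}_{0,j}$, giving $\deg\psi_{\bullet}|_{C'} = -2+j$. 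Your two caveats (transversality, and whether the codimension-two collisions affect the normal-bundle degree) are the right things to flag, and your resolutions of them are sound. One small remark: when $n=2j-1$ the indices $j-1$ and $j$ name the same symmetric boundary class $B_{j-1}=B_{j}$, so the three-line display in the statement must then be read additively; your argument still produces the correct total $j-(j-2)=2$ in that case.
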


For convenience of readers, we leave a special case of $\Mzs$ below. The proof is an easy combination of above results. 

\begin{corollary}
The $S_{6}$-invariant Neron-Severi space $N^{1}(\Mzs)^{S_{6}}$ has dimension two. The symmetric effective cone $\mathrm{Eff}(\Mzs)^{S_{6}}$ is generated by $B_{2}$ and $B_{3}$. Moreover,
\begin{enumerate}
	\item $K_{\Mzs} = -\frac{2}{5}B_{2} - \frac{1}{5}B_{3}$,
	\item $\psi = \frac{8}{5}B_{2} + \frac{9}{5}B_{3}$,
	\item $B_{2} = -\frac{9}{2}K_{\Mzs} - \frac{1}{2}\psi$,
	\item $B_{3} = 4K_{\Mzs} + \psi$.
\end{enumerate}
\end{corollary}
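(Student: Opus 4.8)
The plan is to assemble the four displayed identities from the two lemmas quoted just above, and to read off the two structural claims (dimension two, and the description of the effective cone) directly from \cite[Theorem 1.3]{KM96}.

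First I would settle the qualitative assertions. Specializing the formula $\dim N^{1}(\Mzn)^{S_{n}} = \lfloor n/2\rfloor - 1$ to $n = 6$ gives $\lfloor 6/2\rfloor - 1 = 2$, so the invariant Neron-Severi space is two-dimensional. The symmetric boundary divisors for $n = 6$ are exactly $B_{2}$ and $B_{3}$, since the range $2 \le i \le \lfloor n/2\rfloor$ collapses to $i \in \{2,3\}$ and the relation $B_{i} = B_{6-i}$ identifies the remaining indices. Because \cite[Theorem 1.3]{KM96} states that $\mathrm{Eff}(\Mzn)^{S_{n}}$ is generated by the symmetric boundary divisors, the symmetric effective cone is generated by $B_{2}$ and $B_{3}$. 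To see that this is a genuine two-dimensional cone rather than a ray, I would observe that $B_{2}$ and $B_{3}$ are linearly independent; this is automatic once the identities below are established, since the change-of-basis matrix to $\{K_{\Mzs}, \psi\}$ turns out to be non-degenerate.

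Next I would compute items (1) and (2) by substitution into the general formulas. For (1), plugging $n = 6$ into $K_{\Mzn} = \sum_{i=2}^{\lfloor n/2\rfloor}\left(\frac{i(n-i)}{n-1} - 2\right)B_{i}$ yields the coefficient $\frac{2\cdot 4}{5} - 2 = -\frac{2}{5}$ for $B_{2}$ and $\frac{3\cdot 3}{5} - 2 = -\frac{1}{5}$ for $B_{3}$. For (2), I would use $\psi = K_{\Mzn} + 2B$ with $B = B_{2} + B_{3}$ (the total boundary when $n = 6$), so that $\psi = (-\frac{2}{5} + 2)B_{2} + (-\frac{1}{5}+2)B_{3} = \frac{8}{5}B_{2} + \frac{9}{5}B_{3}$.

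Finally, items (3) and (4) follow by inverting the $2\times 2$ system expressing $(K_{\Mzs}, \psi)$ in terms of $(B_{2}, B_{3})$; clearing denominators gives $5K_{\Mzs} = -2B_{2} - B_{3}$ and $5\psi = 8B_{2} + 9B_{3}$, and solving produces $B_{2} = -\frac{9}{2}K_{\Mzs} - \frac{1}{2}\psi$ and $B_{3} = 4K_{\Mzs} + \psi$. There is no serious obstacle here: the entire statement is a specialization of the general $\Mzn$ formulas recalled above together with a routine linear inversion. The only point requiring a moment of care is correctly identifying the total boundary $B$ as $B_{2} + B_{3}$ in the $n = 6$ case when applying $\psi = K_{\Mzn} + 2B$.
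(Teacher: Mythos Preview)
Your proposal is correct and follows exactly the approach the paper intends: the paper's entire proof is the sentence ``The proof is an easy combination of above results,'' and you have simply filled in those combinations (specializing the dimension and effective-cone statements from \cite[Theorem 1.3]{KM96}, substituting $n=6$ into the formulas for $K_{\Mzn}$ and $\psi$, and inverting the resulting $2\times 2$ system). There is nothing to add.
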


Figure \ref{fig:effectivecone} shows several rays in $N^{1}(\Mzs)^{S_{6}}$ generated by special divisors.

\begin{figure}[!hb]
\begin{tikzpicture}[scale=0.5]
	\draw[->][line width=1.5pt] (5, 0) -- (10, 0);
	\draw[line width=1.5pt] (0,0) -- (5,0);
	\draw[->][line width=1.5pt] (5,0) -- (5,5);
	\draw[->][line width=1.5pt] (5,0) -- (9.8,2);
	\draw[->][line width=1.5pt] (5,0) -- (0.2, -1);
	\node at (11,0) {$K_{\Mzs}$};
	\node at (5,5.5) {$\psi$};
	\node at (10.5, 2) {$B_{3}$};
	\node at (-0.5,-1) {$B_{2}$};
\end{tikzpicture}
\caption{The Neron-Severi space of $\Mzs$}\label{fig:effectivecone}
\end{figure}

On $\Mzs$, there are only two types of F-curves, whose partition is of the form $1+1+1+3$ or $1+1+2+2$. We will denote the corresponding F-curves by $F_{1,1,1,3}$ and $F_{1,1,2,2}$ respectively. 

\begin{corollary}\label{cor:intersection}
On $\Mzs$, the intersection of symmetric divisors and curve classes are given by
the following table.
\begin{center}
\begin{tabular}{|c|c|c|c|c|}
\hline
& $\psi$ & $K_{\Mzs}$ & $B_{2}$ & $B_{3}$\\ \hline
$F_{1,1,1,3}$ & 3 & -1 & 3 & -1\\ \hline
$F_{1,1,2,2}$ & 2 & 0 & -1 & 2\\ \hline
$C_{4}$ & 4 & 0 & -2 & 4\\ \hline
\end{tabular}
\end{center}
\end{corollary}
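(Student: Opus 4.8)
The plan is to verify the intersection table in Corollary \ref{cor:intersection} by reducing everything to the two preceding lemmas, which express intersection numbers of F-curves and of the curves $C_j$ against boundary divisors. Since $\psi$ and $K_{\Mzs}$ are written in the basis $\{B_2, B_3\}$ by the previous corollary, the entire table follows from knowing the four intersection numbers $F_{1,1,1,3}\cdot B_i$ and $F_{1,1,2,2}\cdot B_i$ for $i = 2, 3$, together with the numbers $C_4 \cdot B_i$. So the real content is just three applications of the two F-curve and $C_j$ lemmas, followed by linear algebra.

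First I would compute the boundary intersections of the F-curves using \cite[Corollary 4.4]{KM96}. For $F_{1,1,1,3}$ the partition has sizes $(a_1,a_2,a_3,a_4) = (1,1,1,3)$, and for $F_{1,1,2,2}$ it has sizes $(1,1,2,2)$; applying the formula $F \cdot \sum r_i B_i = -r_{a_1}-r_{a_2}-r_{a_3}-r_{a_4} + r_{a_1+a_2}+r_{a_1+a_3}+r_{a_1+a_4}$ with the conventions $r_1 = 0$ and $r_{a+b} = r_{n-a-b}$ (here $n=6$, so $r_4 = r_2$ and $r_5 = r_1 = 0$) gives the two boundary rows. One must be slightly careful with the symmetry convention $B_i = B_{6-i}$ so that only $B_2$ and $B_3$ appear. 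For $C_4$ I would instead invoke \cite[Lemma 4.8]{KM96} with $j = 4$, which directly yields $C_4 \cdot B_3 = 4$, $C_4 \cdot B_4 = -(4-2) = -2$, and zero otherwise; since $B_4 = B_2$, this reads $C_4 \cdot B_2 = -2$ and $C_4 \cdot B_3 = 4$, giving the last row.

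Once the $B_2$ and $B_3$ columns are in hand for all three curve classes, the $\psi$ and $K_{\Mzs}$ columns are obtained by substituting $\psi = \frac{8}{5}B_2 + \frac{9}{5}B_3$ and $K_{\Mzs} = -\frac{2}{5}B_2 - \frac{1}{5}B_3$ from the preceding corollary and taking the corresponding linear combinations of intersection numbers. For instance $F_{1,1,1,3}\cdot \psi = \frac{8}{5}(3) + \frac{9}{5}(-1) = 3$ and $F_{1,1,1,3}\cdot K_{\Mzs} = -\frac{2}{5}(3) - \frac{1}{5}(-1) = -1$, and similarly for the other two rows; these match the claimed entries, which also serves as an internal consistency check on the boundary computations.

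I do not expect any serious obstacle here: the statement is explicitly flagged in the excerpt as ``an easy combination of above results.'' The only point requiring care is the bookkeeping of the symmetrization conventions $r_{a+b} = r_{n-a-b}$ and $B_i = B_{n-i}$, which is where a sign or index slip could creep in; double-checking that the $\psi$ and $K_{\Mzs}$ entries computed two independent ways (directly from the formulas versus via the linear combination) agree is the cleanest safeguard against such an error.
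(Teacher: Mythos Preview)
Your proposal is correct and follows exactly the approach the paper intends: the corollary is stated without proof, being an immediate consequence of the two cited lemmas on $F$-curve and $C_j$ intersections together with the expressions $K_{\Mzs} = -\frac{2}{5}B_2 - \frac{1}{5}B_3$ and $\psi = \frac{8}{5}B_2 + \frac{9}{5}B_3$ from the preceding corollary. Your bookkeeping with the conventions $r_1 = 0$, $r_{a+b} = r_{6-a-b}$, and $B_4 = B_2$ is handled correctly, and the numerical checks all go through.
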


Note that on $\Mzs$, $C_{3} = F_{1,1,1,3}$.

\section{Veronese quotients and their geometric properties}\label{sec:veronesequotient}

In this section, we give a review about (a special case) of Veronese quotients introduced in \cite{Gia10}. See \cite{GJM13, GJMS12} for a generalization. The following description is different from the original one in \cite{Gia10} (However, see \cite[Remark 2.4]{Gia10}). We will use a construction using moduli spaces of stable maps, which is useful for our purpose, in particular the description of the morphism $\varphi_{A} : \Mzn \to V_{A}^{d}$. 

\subsection{Veronese quotients}
Let $\Mznpdd$ be Kontsevich's moduli space of stable maps (\cite{FP97}). It parametrizes maps $f : (C, x_{1}, \cdots, x_{n}) \to \PP^{d}$ from an arithmetic genus 0 curve $C$ with $n$ marked points to $\PP^{d}$ such that $f_{*}[C] = d$ with following stability conditions. Such map $f$ is called stable if
\begin{itemize}
	\item $C$ has at worst nodal singularities,
	\item $x_{i}$ are distinct smooth points on $C$, 
	\item $\omega_{C} + \sum x_{i}$ is $f$-ample. 
\end{itemize}

There are $n$ evaluation maps $e_{i} : \Mznpdd \to \PP^{d}$. By taking the product of these maps, we have a map
\[
	e : \Mznpdd \to (\PP^{d})^{n}.
\]
Let $U_{d,n}$ be the image of $e$. Note that $SL_{d+1}$ acts on both $\Mznpdd$ and $(\PP^{d})^{n}$ via $SL_{d+1} \to \mathrm{Aut}(\PP^{d})$ and $e$ is $SL_{d+1}$-equivariant. Thus $U_{d,n}$ is an $SL_{d+1}$-invariant subvariety of $(\PP^{d})^{n}$.

For a choice of positive rational numbers $A = (a_{1}, \cdots, a_{n})$, we can construct a $\QQ$-linearization $L_{A} := \cO(a_{1})\otimes \cO(a_{2}) \otimes \cdots \otimes \cO(a_{n})$. Since the stability does not change if we replace $A$ by its scalar multiple, we will normalize it as $\sum_{i}a_{i} = d+1$. For $(\PP^{d})^{n}$, the (semi)stability can be computed explicitly. 

\begin{theorem}\cite[Theorem 11.1]{Dol03}
Let $A = (a_{1}, \cdots, a_{n})$ be a normalized linearization. A configuration $(x_{1}, \cdots, x_{n}) \in (\PP^{d})^{n}$ is (semi)stable if and only if for any proper linear subspace $W \subset \PP^{d}$, 
\[
	\sum_{x_{j}\in W}a_{j}\; (\le) < \dim W + 1.
\]
\end{theorem}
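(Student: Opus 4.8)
The plan is to apply the Hilbert--Mumford numerical criterion for the diagonal action of $G = SL_{d+1}$ on $(\PP^{d})^{n}$ linearized by $L_{A}$. Recall that a point $x = (x_{1}, \ldots, x_{n})$ is semistable (resp. stable) if and only if $\mu^{L_{A}}(x, \lambda) \ge 0$ (resp. $> 0$) for every nontrivial one-parameter subgroup $\lambda : \mathbb{G}_{m} \to G$, where $\mu^{L_{A}}$ is Mumford's weight function. Since every one-parameter subgroup of $SL_{d+1}$ is conjugate to a diagonal one, and since the inequality we want to prove is invariant under the choice of homogeneous coordinates, I would fix an eigenbasis $e_{0}, \ldots, e_{d}$ of $\CC^{d+1}$ and write $\lambda(t) e_{i} = t^{r_{i}} e_{i}$ with integer weights normalized so that $r_{0} \ge r_{1} \ge \cdots \ge r_{d}$ and $\sum_{i} r_{i} = 0$.

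The second step is to compute $\mu^{L_{A}}$ explicitly. Because $L_{A}$ is an external tensor product and $\mu$ is additive over such products, the total weight splits as $\mu^{L_{A}}(x, \lambda) = \sum_{j=1}^{n} a_{j}\, \mu^{\cO(1)}(x_{j}, \lambda)$. For a single point the standard formula gives $\mu^{\cO(1)}(x_{j}, \lambda) = -\min\{ r_{i} : x_{j,i} \ne 0\}$, where $x_{j,i}$ are the coordinates of $x_{j}$ in the chosen eigenbasis; equivalently $\mu^{\cO(1)}(x_{j}, \lambda) = -r_{M(j)}$ with $M(j) = \max\{ i : x_{j,i} \ne 0\}$, since the weights are decreasing. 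Thus $\mu^{L_{A}}(x, \lambda) = -\sum_{j} a_{j} r_{M(j)}$, a quantity that, for a fixed eigenbasis, is \emph{linear} in the weight vector $(r_{0}, \ldots, r_{d})$, because the indices $M(j)$ depend only on the vanishing pattern of the coordinates, not on the weight values.

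The key reduction is then to test the sign of this linear function only on the extremal rays of the cone $\{ r_{0} \ge \cdots \ge r_{d},\ \sum_{i} r_{i} = 0\}$. These rays are the weight vectors $\lambda^{(p)}$, $1 \le p \le d$, with $r_{0} = \cdots = r_{p-1} = d+1-p$ and $r_{p} = \cdots = r_{d} = -p$, and $\lambda^{(p)}$ corresponds exactly to the proper linear subspace $W = \PP(\langle e_{0}, \ldots, e_{p-1}\rangle)$ of dimension $p-1$. Writing $s = \sum_{x_{j} \in W} a_{j}$ and using $\sum_{j} a_{j} = d+1$, a short computation I expect to give $\mu^{L_{A}}(x, \lambda^{(p)}) = (d+1)(p - s) = (d+1)\big((\dim W + 1) - \sum_{x_{j} \in W} a_{j}\big)$. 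Requiring this to be $\ge 0$ (resp. $> 0$) for every eigenbasis, hence for every proper subspace $W$, yields precisely the stated inequality $\sum_{x_{j} \in W} a_{j}\ (\le)\ <\ \dim W + 1$.

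The main obstacle I anticipate is the passage from ``all one-parameter subgroups'' to ``all proper linear subspaces'': one must justify that it suffices to evaluate $\mu$ on the finitely many extremal weight configurations $\lambda^{(p)}$ rather than on the full continuum of $\lambda$. This rests on the linearity of $\mu^{L_{A}}(x, \cdot)$ over the (fixed-eigenbasis) weight cone, so that its minimum over the cone is attained on an extremal ray; care is also needed with the sign conventions in Mumford's weight and with the behavior on the walls of the cone, where several weights coincide and the eigenbasis fails to be unique. Once these points are handled, identifying each extremal ray with a coordinate subspace, and letting the eigenbasis vary to sweep out all subspaces, completes the argument.
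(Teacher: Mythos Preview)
The paper does not supply its own proof of this statement; it simply quotes the result from \cite[Theorem 11.1]{Dol03} and moves on. Your argument is correct and is essentially the standard Hilbert--Mumford proof found there: the computation $\mu^{L_{A}}(x,\lambda^{(p)}) = (d+1)\big((\dim W + 1) - \sum_{x_{j}\in W} a_{j}\big)$ is right, and the reduction to extremal rays is justified exactly as you say, by linearity of $\mu^{L_{A}}(x,\cdot)$ on the closed Weyl chamber $\{r_{0}\ge\cdots\ge r_{d},\ \sum r_{i}=0\}$.
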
\label{thm:stability}

In particular, to guarantee the nonemptiness of (semi)stable locus, we need a necessary condition $a_{i} \;(\le) < 1$ for all $i$. Thus the hypersimplex
\[
	\Delta(d+1, n) = \{(a_{1}, \cdots, a_{n}) \in \QQ^{n}\;|\;
	0 \le a_{i} \le 1, \sum_{i=1}^{n}a_{i} = d+1\}
\]
can be regarded as the space of effective linearizations.

\begin{definition}
Let $A = (a_{1}, \cdots, a_{n}) \in \Delta(d+1, n)$ such that $n \ge d+3$.
The \textbf{Veronese quotient} is the GIT quotient
\[
	V_{A}^{d} := U_{d,n}\git_{L_{A}}SL_{d+1}.
\]
\end{definition}

\begin{remark}
\begin{enumerate}
	\item It is called Veronese quotient because for a general configuration 
	$(x_{1}, \cdots, x_{n})$ with $n \ge d+3$, 
	there exists a unique rational normal curve $C$ in $\PP^{d}$ such that 
	$x_{i} \in C$ for all $i$. 
	\item This is a special case $\gamma = 0$ of general Veronese quotients 
	described in \cite{GJM13, GJMS12}.
	\item Up to projective equivalence, there is a unique rational normal 
	curve in $\PP^{d}$. Thus after taking the quotient, we can regard it as a 
	moduli space of configuration of points on an abstract rational curve and 
	their degenerations. So $V_{A}^{d}$ is birational to $\Mzn$.
\end{enumerate}
\end{remark}

\begin{example}\label{ex:Kapranovmorphism}
If $d = 1$, then $U_{1,n} = (\PP^{1})^{n}$ and the GIT quotient $(\PP^{1})^{n}\git_{L_{A}} SL_{2}$ itself is birational to $\Mzn$. In this case, the existence of a birational morphism $\rho: \Mzn \to (\PP^{1})^{n}\git_{L_{A}} SL_{2}$ is proved in \cite{Kap93b}. 

When $n$ is even and $L_{A}$ is a symmetric linearization, $(\PP^{1})^{n}\git SL_{2}$ has ${n \choose 2}$ singular points. When $n=6$, the map $\rho$ contracts an irreducible component of a boundary divisor $B_{3}$ to a singular point. In particular, $F_{1,1,1,3}$ is contracted. Thus the semi-ample divisor $\rho^{*}L$ for an ample divisor $L$ on $(\PP^{1})^{6}\git_{L_{A}}SL_{2}$ is a scalar multiple of $K_{\Mzs}+\frac{1}{3}\psi$ (see Corollary \ref{cor:intersection}).
\end{example}

\begin{example}
For the purpose of this paper, the most important example is $\Vat$, where $n = 6$ and $A = (\frac{1}{2}, \cdots, \frac{1}{2})$. In this case there are many strictly semi-stable points on $\Vat$. We will study its (semi)stability in Section \ref{sec:explicitcomputation} in detail.
\end{example}

\subsection{Morphisms from $\Mzn$ and canonical polarizations}

One interesting common property of Veronese quotients is that they admit  birational morphisms from $\Mzn$. This section is an outline of a proof in \cite{GJM11}. You can find an original proof via Chow quotients in \cite[Section 3]{Gia10}.

For $(f : (C, x_{1}, \cdots, x_{n}) \to \PP^{d}) \in \Mznpdd$, by forgetting the map $f$ and stabilizing the domain, we can obtain a stable rational curve $(C^{s}, x_{1}, \cdots, x_{n}) \in \Mzn$. Thus there is a forgetful morphism $F : \Mznpdd \to \Mzn$.
\[
	\xymatrix{\Mznpdd \ar[r]^{\;\;\;e} \ar[d]_{F} & (\PP^{d})^{n}\\
	\Mzn}
\]

For an effective linearization $L_{A}$ on $(\PP^{d})^{n}$, there exists an effective linearization $L_{A}'$ on $\Mznpdd$ such that 
\[
	e^{-1}(((\PP^{d})^{n})^{s}(L_{A}))  
	\subset \Mznpdd^{s}(L_{A}) \subset 
	\Mznpdd^{ss}(L_{A}') \subset 
	e^{-1}(((\PP^{d})^{n})^{ss}(L_{A}'))
\]
where $X^{ss}(L)$ (resp. $X^{s}(L)$) is the semistable (resp. stable) part of $X$ with respect to the linearization $L$. In particular, we have a quotient morphism $\overline{e} : \Mznpdd\git_{L_{A}'}SL_{d+1} \to (\PP^{d})^{n}\git_{L_{A}}SL_{d+1}$. 

Since $F$ is $SL_{d+1}$-invariant, there exists a quotient map $\overline{F}$.
\[
	\xymatrix{\Mznpdd\git_{L_{A}'}SL_{d+1} \ar[r]^{\;\;\;\overline{e}}
	\ar[d]_{\overline{F}}& (\PP^{d})^{n}\git_{L_{A}}SL_{d+1}\\
	\Mzn}
\]
In \cite[Proposition 4.6]{GJM13}, it is proved that for a general effective linearization (stability and semistability  on $(\PP^{d})^{n}$ coincide for $L_{A}$), $\overline{F}$ is an isomorphism. Thus we have a morphism 
\[
	\varphi_{A} = \overline{ev} \circ \overline{F}^{-1}: 
	\Mzn \to (\PP^{d})^{n}\git_{L_{A}}SL_{d+1}.
\]
It is straightforward to check that the image of $\varphi_{A}$ is $V_{A}^{d}$. 

For any effective linearization $L_{A}$, if we perturb it slightly, we obtain an effective linearization $L_{A_{\epsilon}}$ such that the stability coincides with the semi-stability. From the general theory of the variation of GIT, we have a morphism 
\[
	\Mznpdd\git_{L_{A_{\epsilon}}'}SL_{d+1} 
	\to (\PP^{d})^{n}\git_{L_{A_{\epsilon}}}SL_{d+1} 
	\to (\PP^{d})^{n}\git_{L_{A}}SL_{d+1}.
\]
We will denote it by $\overline{e}$, too.
Also $\varphi_{A}$ is defined as $\overline{e} \circ \overline{F}^{-1}$. 

\begin{remark}\label{rem:contractionmap}
This morphism $\varphi_{A}$ can be described in the following slightly different way. Note that for any effective linearization $L_{A}$ on $(\PP^{d})^{n}$, there is a commutative diagram 
\[
	\xymatrix{\Mznpdd^{s}(L_{A_{\epsilon}}') \ar@{^{(}->}[r]\ar[d]&
	\Mznpdd^{ss}(L_{A}') \ar[d]\\
	((\PP^{d})^{n})^{s}(L_{A_{\epsilon}})\ar@{^{(}->}[r]&
	((\PP^{d})^{n})^{ss}(L_{A})}
\]
where $L_{A}'$, $L_{A_{\epsilon}}'$ are linearizations explained above. Finally,  we have a quotient diagram
\[
	\xymatrix{\Mznpdd\git_{L_{A_{\epsilon}}'}SL_{d+1}
	\ar[r]\ar[d]&
	\Mznpdd \git_{L_{A}'}SL_{d+1} \ar[d]\\
	((\PP^{d})^{n})\git_{L_{A_{\epsilon}}}SL_{d+1}\ar[r]&
	((\PP^{d})^{n})\git_{L_{A}}SL_{d+1}}
\]
which is commutative.
	
This result gives us a practical way to describe the contraction map $\varphi_{\cA}$. Fix a curve $(C, x_{1}, \cdots, x_{n}) \in \Mzn$. First of all, find any stable map $f : (\tilde{C}, x_{1}, \cdots, x_{n}) \to \PP^{d}$ up to projective transformation such that $f \in \Mznpdd^{ss}$ with respect to $L_{A'}$ and $F(f) = (C, x_{1}, \cdots, x_{n})$. (If there is a strictly semistable point, then $f$ and $\tilde{C}$ may not be unique.) Indeed, $f$ can be determined by degree of $f$ on each irreducible component of $\tilde{C}$. Then we obtain a point configuration $x = (f(x_{1}), \cdots, f(x_{n})) \in (\PP^{d})^{n}$. By the variation of GIT, there exists a unique closed orbit in $((\PP^{d})^{n})^{ss}$ with respect to $L_{A}$ which is contained in the closure of the orbit of $x$. This is $\varphi_{A}(C, x_{1}, \cdots, x_{n})$. For a detail, see \cite[Section 3]{GJM13}.
\end{remark}

A GIT quotient $V_{A}^{d}$ has a canonical polarization $\overline{L}_{A}$ from its definition. Since $\varphi_{A} : \Mzn \to V_{A}^{d}$ is a regular morphism, by taking pull-back we obtain a semi-ample line bundle $D_{A}:= \varphi_{A}^{*}(\overline{L}_{A})$ on $\Mzn$.

The numerical class of $D_{A}$ is computed in \cite[Theorem 2.1]{GJMS12} in a broader context (In our situation, $\gamma = 0$ in the statement of the Theorem.). The following result is a special case we want to see in this article. 

\begin{lemma}\label{lem:canpolarization}
Suppose that $n = 6$ and $A = (\frac{1}{2}, \cdots, \frac{1}{2})$. Consider $\Vat$ and the pull-back $D_{A}=\varphi_{A}^{*}\overline{L}_{A}$ of the canonical polarization. Then $F_{1,1,1,3}\cdot D_{A} = \frac{1}{2}$ and $F_{1,1,2,2} \cdot D_{A} = 0$. 
\end{lemma}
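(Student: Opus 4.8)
The plan is to compute the two intersection numbers $F_{1,1,1,3}\cdot D_A$ and $F_{1,1,2,2}\cdot D_A$ by understanding the geometry of the contraction $\varphi_A : \Mzs \to \Vat$ on the F-curves, using the practical description of $\varphi_A$ given in Remark \ref{rem:contractionmap}. Since $D_A = \varphi_A^*(\overline{L}_A)$ is the pull-back of an ample class and $\varphi_A$ is a morphism, the projection formula gives $F \cdot D_A = \varphi_{A*}(F)\cdot \overline{L}_A$. Hence the key point is to decide, for each of the two F-curves, whether $\varphi_A$ contracts it (in which case $\varphi_{A*}(F)=0$ and the intersection is $0$) or maps it finitely onto a curve in $\Vat$ (in which case the intersection is positive). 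So the statement $F_{1,1,2,2}\cdot D_A = 0$ amounts to showing that the F-curve $F_{1,1,2,2}$ is contracted by $\varphi_A$, while $F_{1,1,1,3}\cdot D_A = \frac{1}{2}$ amounts to showing $F_{1,1,1,3}$ is \emph{not} contracted and pins down the exact value.

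First I would make the general numerical setup explicit. By Corollary \ref{cor:intersection}, any symmetric semi-ample class $D_A$ can be written as $D_A = a\, B_2 + b\, B_3$ (equivalently in the $K,\psi$ basis), and the intersection numbers against the two F-curves are determined by the table there: $F_{1,1,1,3}\cdot B_2 = 3$, $F_{1,1,1,3}\cdot B_3 = -1$, $F_{1,1,2,2}\cdot B_2 = -1$, $F_{1,1,2,2}\cdot B_3 = 2$. Thus once I know $D_A$ as a combination of $B_2, B_3$ (or equivalently know $D_A$ up to a single scalar together with one of the two intersection numbers), both numbers follow by linear algebra. This means it suffices to determine the class of $D_A$ in the two-dimensional space $N^1(\Mzs)^{S_6}$, and the two asserted values serve as a mutual consistency check.

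The main work, and the main obstacle, is the explicit GIT analysis on $\Vat$ needed to evaluate $\varphi_A$ along the one-parameter families $F_{1,1,1,3}$ and $F_{1,1,2,2}$. For each F-curve I would take the universal family of stable rational curves it parametrizes, use Remark \ref{rem:contractionmap} to produce for each member a representative stable map $f:(\tilde C, x_1,\ldots,x_6)\to \PP^d$ that is semistable for $L_{A'}$ and restricts to the given curve under $F$, and then read off the resulting point configuration $(f(x_1),\ldots,f(x_6)) \in (\PP^d)^n$ together with its closed orbit in the GIT quotient with respect to $L_A$ with $A=(\tfrac12,\ldots,\tfrac12)$. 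The delicate part is the weight-$\tfrac12$ data: because every $a_i = \tfrac12$ there are strictly semistable configurations (as flagged in the Example preceding this lemma), so along the family the representative may jump to the boundary and several points may collide or the orbit may degenerate to a single closed orbit. I expect that for $F_{1,1,2,2}$ the entire family maps to one point of $\Vat$ (forcing $F_{1,1,2,2}\cdot D_A = 0$), reflecting that the $2+2$ splitting is exactly where semistability degenerates, whereas for $F_{1,1,1,3}$ the family sweeps out an honest curve and the degree computation yields $\tfrac12$.

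To finish, I would cross-check the outcome of the GIT analysis against the numerical constraints of the previous paragraph: the pair of values $(F_{1,1,1,3}\cdot D_A,\, F_{1,1,2,2}\cdot D_A)$ must be consistent with $D_A = aB_2 + bB_3$ for some $a,b$, and the computed $\tfrac12$ and $0$ do correspond to a well-defined, effective $D_A$. Alternatively, rather than running the GIT degeneration by hand, one can simply invoke \cite[Theorem 2.1]{GJMS12} (with $\gamma = 0$), which gives a closed formula for the numerical class of $D_A$; specializing $n=6$ and $A=(\tfrac12,\ldots,\tfrac12)$ and pairing with the F-curves via Corollary \ref{cor:intersection} yields the two numbers directly. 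I would present the geometric contraction argument as the conceptual explanation and cite the formula as the rigorous shortcut, since the hands-on strictly-semistable bookkeeping is exactly the step most prone to error.
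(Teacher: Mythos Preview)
Your proposal is correct, and in fact you identify exactly the route the paper takes: the lemma is stated immediately after the sentence ``The numerical class of $D_A$ is computed in \cite[Theorem 2.1]{GJMS12} in a broader context (In our situation, $\gamma = 0$ in the statement of the Theorem.),'' and no further proof is given---the paper simply regards the lemma as a direct specialization of that formula. So your ``rigorous shortcut'' is the paper's entire argument.

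Your longer geometric route (analyze $\varphi_A$ along each F-curve via the recipe of Remark~\ref{rem:contractionmap}, then use the projection formula) is sound, and the paper does carry out precisely this GIT/orbit-closure analysis---but in Section~\ref{sec:explicitcomputation}, \emph{after} the lemma, and for the separate purpose of describing $\varphi_A$ explicitly. There it is shown that $F_{1,1,2,2}$ lands in a single closed orbit (stratum I), confirming $F_{1,1,2,2}\cdot D_A = 0$ geometrically. What your geometric approach does not easily yield by itself is the precise value $\tfrac12$ for $F_{1,1,1,3}$: knowing the curve is not contracted gives positivity, but pinning down the exact degree of $\varphi_{A*}(F_{1,1,1,3})$ against $\overline{L}_A$ requires either the closed formula or an independent degree computation you do not spell out. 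You are right to flag this as the error-prone step and to fall back on the citation; that is exactly what the paper does.
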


\section{An explicit computation of Veronese quotient $\Vat$}
\label{sec:explicitcomputation}

When $n=6$, with respect to the symmetric linearization $L_{A}$, $U_{2, 6}$ has strictly semistable points. Thus to describe $V_{A}^{2} = U_{2, 6}\git_{L_{A}}SL_{3}$ concretely, we need to analyze the stability of $(\PP^{2})^{6}$ in detail. In this section, by computing the (semi)stable locus, we describe the morphism $\varphi_{A} : \Mzs \to V_{A}^{2}$ explicitly. From this section, we will use symmetric linearizations only. Note that there exists a unique symmetric linearization on $(\PP^{r})^{d}$ up to normalization. So we will not indicate the linearization for GIT quotients.

\subsection{An explicit computation of stability on $(\PP^{2})^{6}\git SL_{3}$}
Due to Theorem \ref{thm:stability}, for a strictly semistable configuration on $(\PP^{2})^{6}$ there are two possibilities:
\begin{itemize}
	\item on a point, there are exactly two marked points;
	\item on a line, there are four points.
\end{itemize}

Thus we can make a list of strictly semistable configurations. See Table \ref{tbl:semistableconfiguration}. For each stratum, there is a figure for a typical element in the stratum. Three lines on the figure are standard coordinates lines on $\PP^{2}$ and the symbol $\odot$ means a point with multiplicity two. In the next three rows, the stabilizer in $SL_{3}$ of the configuration, the dimension and the orbit closure in the semistable locus for each stratum are written. 

\begin{table}[!ht]
\begin{tabular}{|c|c|c|c|c|}\hline
stratum & I & II & III & IV\\ \hline
&
\begin{tikzpicture}[scale=0.2]
	\draw[line width = 1pt] (1, 10) -- (1, 0);
	\draw[line width = 1pt] (0, 1) -- (10, 1);
	\draw[line width = 1pt] (0, 10) -- (10, 0);
	\fill (1,1) circle (10pt);
	\fill (1,9) circle (10pt);
	\fill (9,1) circle (10pt);
	\draw (1,1) circle (18pt);
	\draw (1,9) circle (18pt);
	\draw (9,1) circle (18pt);
\end{tikzpicture} &
\begin{tikzpicture}[scale=0.2]
	\draw[line width = 1pt] (1, 10) -- (1, 0);
	\draw[line width = 1pt] (0, 1) -- (10, 1);
	\draw[line width = 1pt] (0, 10) -- (10, 0);
	\fill (1,1) circle (10pt);
	\fill (1,9) circle (10pt);
	\fill (9,1) circle (10pt);
	\fill (1,5) circle (10pt);
	\draw (1,1) circle (18pt);
	\draw (9,1) circle (18pt);
\end{tikzpicture} &
\begin{tikzpicture}[scale=0.2]
	\draw[line width = 1pt] (1, 10) -- (1, 0);
	\draw[line width = 1pt] (0, 1) -- (10, 1);
	\draw[line width = 1pt] (0, 10) -- (10, 0);
	\fill (1,1) circle (10pt);
	\fill (1,9) circle (10pt);
	\fill (9,1) circle (10pt);
	\fill (3,4) circle (10pt);
	\draw (1,1) circle (18pt);
	\draw (9,1) circle (18pt);
\end{tikzpicture} &
\begin{tikzpicture}[scale=0.2]
	\draw[line width = 1pt] (1, 10) -- (1, 0);
	\draw[line width = 1pt] (0, 1) -- (10, 1);
	\draw[line width = 1pt] (0, 10) -- (10, 0);
	\fill (1,1) circle (10pt);
	\fill (1,9) circle (10pt);
	\fill (9,1) circle (10pt);
	\fill (1,5) circle (10pt);
	\fill (5,1) circle (10pt);
	\draw (1,1) circle (18pt);
\end{tikzpicture}
\\ \hline
stabilizer & $(\CC^{*})^{2}$ & $\CC^{*}$ & 1 & 1\\ \hline
dimension & 6 & 7 & 8 & 8\\ \hline
orbit closure & closed & $\in$ I & $\in$ I, II & $\in$ I, II\\ \hline
\hline
stratum & V & VI & VII & VIII\\ \hline
&\begin{tikzpicture}[scale=0.2]
	\draw[line width = 1pt] (1, 10) -- (1, 0);
	\draw[line width = 1pt] (0, 1) -- (10, 1);
	\draw[line width = 1pt] (0, 10) -- (10, 0);
	\fill (1,1) circle (10pt);
	\fill (1,9) circle (10pt);
	\fill (9,1) circle (10pt);
	\fill (1,5) circle (10pt);
	\fill (5,5) circle (10pt);
	\draw (1,1) circle (18pt);
\end{tikzpicture}
&
\begin{tikzpicture}[scale=0.2]
	\draw[line width = 1pt] (1, 10) -- (1, 0);
	\draw[line width = 1pt] (0, 1) -- (10, 1);
	\draw[line width = 1pt] (0, 10) -- (10, 0);
	\fill (1,1) circle (10pt);
	\fill (1,9) circle (10pt);
	\fill (9,1) circle (10pt);
	\fill (1,5) circle (10pt);
	\fill (3,3) circle (10pt);
	\draw (1,1) circle (18pt);
\end{tikzpicture}
&
\begin{tikzpicture}[scale=0.2]
	\draw[line width = 1pt] (1, 10) -- (1, 0);
	\draw[line width = 1pt] (0, 1) -- (10, 1);
	\draw[line width = 1pt] (0, 10) -- (10, 0);
	\fill (1,1) circle (10pt);
	\fill (1,9) circle (10pt);
	\fill (9,1) circle (10pt);
	\fill (6,4) circle (10pt);
	\fill (4,6) circle (10pt);
	\draw (1,1) circle (18pt);
\end{tikzpicture}
&
\begin{tikzpicture}[scale=0.2]
	\draw[line width = 1pt] (1, 10) -- (1, 0);
	\draw[line width = 1pt] (0, 1) -- (10, 1);
	\draw[line width = 1pt] (0, 10) -- (10, 0);
	\fill (1,1) circle (10pt);
	\fill (1,9) circle (10pt);
	\fill (9,1) circle (10pt);
	\fill (4,6) circle (10pt);
	\fill (4.5,4) circle (10pt);
	\draw (1,1) circle (18pt);
\end{tikzpicture}
\\ \hline 
stabilizer & 1 & 1 & $\CC^{*}$ & 1\\ \hline
dimension & 9 & 9 & 8 & 9\\ \hline 
orbit closure & $\in$ I, II & $\in$ I, II & closed & $\in$ VII \\ \hline \hline 
stratum & IX & X & XI & \\ \hline
& 
\begin{tikzpicture}[scale=0.2]
	\draw[line width = 1pt] (1, 10) -- (1, 0);
	\draw[line width = 1pt] (0, 1) -- (10, 1);
	\draw[line width = 1pt] (0, 10) -- (10, 0);
	\fill (1,1) circle (10pt);
	\fill (1,9) circle (10pt);
	\fill (9,1) circle (10pt);
	\fill (3,5) circle (10pt);
	\fill (5,3) circle (10pt);
	\draw (1,1) circle (18pt);
\end{tikzpicture}
&
\begin{tikzpicture}[scale=0.2]
	\draw[line width = 1pt] (1, 10) -- (1, 0);
	\draw[line width = 1pt] (0, 1) -- (10, 1);
	\draw[line width = 1pt] (0, 10) -- (10, 0);
	\fill (1,1) circle (10pt);
	\fill (1,9) circle (10pt);
	\fill (9,1) circle (10pt);
	\fill (6,4) circle (10pt);
	\fill (4,6) circle (10pt);
	\fill (1,5) circle (10pt);
\end{tikzpicture}
&
\begin{tikzpicture}[scale=0.2]
	\draw[line width = 1pt] (1, 10) -- (1, 0);
	\draw[line width = 1pt] (0, 1) -- (10, 1);
	\draw[line width = 1pt] (0, 10) -- (10, 0);
	\fill (1,1) circle (10pt);
	\fill (1,9) circle (10pt);
	\fill (9,1) circle (10pt);
	\fill (6,4) circle (10pt);
	\fill (4,6) circle (10pt);
	\fill (3,3) circle (10pt);
\end{tikzpicture}
&
\\ \hline
stabilizer & 1 & 1 & 1 & \\ \hline
dimension & 10 & 9 & 10 & \\ \hline
orbit closure & $\in$ VII & $\in$ VII & $\in$ VII & \\ \hline
\end{tabular}
\vskip 0.3cm
\caption{Strictly semistable configurations}
\label{tbl:semistableconfiguration}
\end{table}

Since the set of geometric points on a GIT quotient bijectively corresponds to the set of closed orbits in the semistable locus, the set of geometric points on $\Vat$ is in bijection with the orbits in $\mathrm{I} \sqcup  \mathrm{VII} \sqcup U_{2,6}^{s}$.

\subsection{A description of $\varphi_{A}$}
Now we can explicitly describe the morphism $\varphi_{A} : \Mzs \to \Vat$ by following the recipe in Remark \ref{rem:contractionmap}. For any $(C, x_{1}, \cdots, x_{6}) \in \Mzs - (B_{2}\cup B_{3})$, there is a degree 2 map $f : C \to \PP^{2}$ whose image is a nonsingular conic. Note that all nonsingular conics are projectively equivalent. $\varphi_{A}(C, x_{1}, \cdots, x_{6})$ is the image (up to projective equivalence) of six points. 

For $(C, x_{1}, \cdots, x_{6}) \in B_{3} - B_{3}\cap B_{2}$, if we define a map $f : C \to \PP^{2}$ such that $\deg f = 1$ on each irreducible component of $C$ and the image is a union of two distinct lines, $f$ is stable with respect to  $L_{A'}$. So the image of each irreducible component is a line. Therefore $\varphi_{A}(C, x_{1}, \cdots, x_{6})$ is a configuration of distinct points on the union of two lines such that 1) on each line there are three distinct points, and 2) on the intersection of two lines there is no marked point. 

An interesting contraction happens on $B_{2}$. Let $(C = C_{1} \cup C_{2}, x_{1}, \cdots, x_{6})$ be a general point on $B_{2}$. Without loss of generality, suppose that $x_{1}, \cdots, x_{4}$ are on $C_{1}$. Then $\deg f|_{C_{1}}$ may be one or two. If it is one, $\deg f|_{C_{2}} = 1$ so the image is a configuration of two lines such that on one line there are four distinct points and on the other line there are two distinct points. Also there is no marked point on the intersection of two lines. Thus it is an element of the stratum XI. If $\deg f|_{C_{2}} = 2$, then the image is a conic with five points (one of them has multiplicity two). So the image is a configuration of type IX. In any cases, the image has its orbit closure on the stratum VII. Note that $(C, x_{1}, \cdots, x_{6})$ depends on the cross ratios of five points, $x_{1}, \cdots, x_{4}$ and the singular points. But on the stratum VII, it depends only on the cross ratio of four points $x_{1}, \cdots, x_{4}$. Therefore the image has dimension one and the contracted curve is exactly $C_{4}$. Since the cross ratio of four points parameterized by $\PP^{1}$, an irreducible component of $B_{2}$ maps to $\PP^{1}$ by $\varphi_{A}$. 

We can observe the contraction of $F_{1,1,2,2}$ in a similar way. Let $(C = C_{1} \cup C_{2} \cup C_{3}, x_{1}, \cdots, x_{6})$ is a general element on $F_{1,1,2,2}$ where $C_{1}$ and $C_{2}$ are two tails. Then $(\deg f|_{C_{1}}, \deg f|_{C_{2}})$ can be $(1,1), (1, 0), (0, 1)$, or $(0,0)$. In each case, it is straightforward to check that the image configuration is of type IV, VI, VI, and III respectively. Thus in any cases, the orbit closure contains the stratum I. Since it does not have a moduli, $F_{1,1,2,2}$ is contracted.

\begin{proposition}\label{prop:geometryofquotient}
The contraction map $\varphi_{A}: \Mzs \to \Vat$ is an isomorphism outside of $B_{2}$. The image of $B_{2}$ is the union of 15 projective lines $L_{1}, \cdots, L_{15}$. Each $L_{i}$ intersects other $L_{j}$ at three points, and at each intersection point there are three $L_{i}$'s which passes through it. Finally, $\Vat$ is singular along $\cup L_{i}$. 
\end{proposition}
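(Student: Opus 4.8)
The plan is to exploit the explicit description of $\varphi_{A}$ obtained above, combining Zariski's Main Theorem for the isomorphism statement, the boundary combinatorics of $\Mzs$ for the incidence pattern, and Luna's \'etale slice theorem for the singularity statement.

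First I would show that $\varphi_{A}$ is an isomorphism over $\Vat \setminus \varphi_{A}(B_{2})$. Recall that $\Vat$ is normal, being a GIT quotient of a normal variety, and that $\varphi_{A}$ is projective and birational. By the computations preceding the statement, every point of $\Mzs \setminus (B_{2}\cup B_{3})$ is sent to a configuration of six distinct points on a smooth conic and every point of $B_{3}\setminus B_{2}$ to two triples of distinct points on a pair of lines; by Theorem \ref{thm:stability} both are \emph{stable} configurations, and they record exactly the cross-ratio data that determines the corresponding point of $\Mzs$, so $\varphi_{A}$ is injective on $\Mzs \setminus B_{2}$. Since stable orbits and the strictly semistable stratum VII orbits (the images of $B_{2}$) map to disjoint subsets of the quotient, this gives $\varphi_{A}^{-1}(\varphi_{A}(B_{2})) = B_{2}$; hence over the normal open set $\Vat \setminus \varphi_{A}(B_{2})$ the map $\varphi_{A}$ is proper, quasi-finite and birational, and a finite birational morphism to a normal variety is an isomorphism.

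Next I would identify the image of $B_{2}$ and its incidences. The divisor $B_{2}$ has $\binom{6}{2}=15$ irreducible components $B_{2}^{S}\cong \overline{\mathrm{M}}_{0,5}$, indexed by the two-element set $S\subset\{1,\dots,6\}$ carried by the short tail. By the explicit recipe, $\varphi_{A}|_{B_{2}^{S}}$ depends only on the cross-ratio of the four marked points indexed by the complement of $S$, so it factors through the forgetful morphism $\overline{\mathrm{M}}_{0,5}\to\overline{\mathrm{M}}_{0,4}$, its fibres being the contracted curves $C_{4}$; thus $L_{S}:=\varphi_{A}(B_{2}^{S})\cong\overline{\mathrm{M}}_{0,4}\cong\PP^{1}$, and the fifteen lines are pairwise distinct because the cross-ratios distinguish them. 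Using the identification $L_{S}\cong\overline{\mathrm{M}}_{0,4}$, its three boundary points correspond to the three ways of pairing up the complement of $S$, equivalently to the three perfect matchings $m$ of $\{1,\dots,6\}$ containing $S$. Following Remark \ref{rem:contractionmap}, the boundary point of $L_{S}$ attached to $m$ is the image of a maximally degenerate curve, which lands on the stratum I configuration whose three double points carry the three pairs of $m$; this configuration depends only on $m$ and is symmetric in its three pairs, so it is the common image $P_{m}$ of the corresponding boundary point of all three lines $L_{S}$ with $S\in m$. Since each duad lies in exactly three perfect matchings and each matching consists of three duads, this yields three points on each line and three lines through each point (the classical $(15_{3},15_{3})$ Cremona--Richmond configuration). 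One still has to check that lines whose indices share an element do not meet elsewhere, which I would deduce from the connectedness of the fibres of $\varphi_{A}$ furnished by Zariski's Main Theorem.

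Finally, for the singularity statement I would apply Luna's \'etale slice theorem at a general point $x$ of stratum VII lying over $L_{S}$. Its stabilizer is $\CC^{*}$ and its orbit has dimension $7$, so a slice has dimension $4$ and $\Vat$ is \'etale-locally isomorphic near $\varphi_{A}(x)$ to the GIT quotient of this slice by the induced linear $\CC^{*}$-action. Computing the $\CC^{*}$-weights, I expect a hyperbolic pattern yielding an affine cone of conifold type (locally $\{xy=zw\}$), which is singular; as $x$ varies this produces a singular point at the generic point of each $L_{S}$, and since the singular locus is closed it then contains all of $\cup L_{i}$. I expect this last weight computation to be the main obstacle, since it requires extracting the slice representation from the GIT data; a secondary subtlety is verifying rigorously that the three boundary-point images in the previous paragraph genuinely coincide rather than merely being combinatorially matched.
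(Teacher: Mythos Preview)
Your treatment of the first two assertions is correct and in fact more carefully argued than the paper's, which simply refers back to the explicit description of $\varphi_{A}$ for the isomorphism on $\Mzs\setminus B_{2}$ and then reads off the incidence pattern from the boundary combinatorics exactly as you do (fifteen components of $B_{2}$, each meeting the others along three $F_{1,1,2,2}$ curves, each such F-curve lying on three components). Your framing via duads and perfect matchings is the same combinatorics in different language, and your use of Zariski's Main Theorem and connectedness of fibres to exclude extraneous intersections is a welcome piece of rigor the paper leaves implicit.

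The genuine divergence is in the singularity argument. You propose Luna's \'etale slice theorem at a general stratum~VII point and a weight computation you have not yet carried out. The paper instead works on the source: it computes the normal bundle of an irreducible component of $B_{2}$ restricted to a general contracted curve $C_{4}$, using the Kapranov morphism $q_{p}:\overline{\mathrm{M}}_{0,5}\to\PP^{2}$ and the formula $N_{B_{2}/\Mzs}|_{C_{4}}\cong q_{p}^{*}\cO_{\PP^{2}}(-1)|_{C_{4}}$ from \cite[Lemma~4.5]{KM96}. Since $q_{p}(C_{4})$ is a conic, this gives $N_{B_{2}/\Mzs}|_{C_{4}}\cong\cO_{\PP^{1}}(-2)$, so the local picture along a fibre is the contraction of a $(-2)$-curve rather than a $(-1)$-curve, and the image cannot be smooth. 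This bypasses the slice-weight analysis you flagged as the main obstacle, and it also identifies the transverse singularity type (an $A_{1}$ surface singularity) essentially for free. Your Luna approach would work in principle and would recover the same local model, but the normal-bundle route is shorter and avoids any subtlety about the smoothness of $U_{2,6}$ at stratum~VII points.
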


\begin{proof}
The first statement is already discussed above. Note that there are 15 irreducible components of $B_{2}$. Each of them is contracted to a line, so the image is a union of 15 lines $L_{1}, \cdots, L_{15}$. An irreducible component intersects with other irreducible components of $B_{2}$ along three projective lines. They are F-curves $F_{1,1,2,2}$ and so are contracted. Note that $F_{1,1,2,2}$ has a point which is an intersection of three irreducible components of $B_{2}$. Therefore for each intersection point there are three $L_{i}$'s. 

Now it is enough to prove the last statement about the singularity. For a general curve $C_{4}$ in $B_{2}$, an irreducible component of $B_{2}$ containing $C_{4}$ is isomorphic to $\overline{\mathrm{M}}_{0,5}$. Let $p$ be the moving point on $\PP^{1}$ which is used to define $C_{4}$ (see Section \ref{sec:divcurve}). Let $q_{p}: \overline{\mathrm{M}}_{0,5} \to \PP^{2}$ be Kapranov morphism for the marked point $p$ (\cite[Section 4.2]{Kap93b}). Then $q_{p}(C_{4})$ is a conic on $\PP^{2}$. Therefore by \cite[Lemma 4.5]{KM96}, 
\[
	N_{D_{2}/\Mzs}|_{C_{4}} \cong 
	q_{p}^{*}\cO_{\PP^{2}}(-1)|_{C_{4}} 
	\cong \cO_{\PP^{2}}(-1)|_{q_{p}(C_{4})} \cong \cO_{\PP^{1}}(-2).
\]
Thus locally $\varphi_{A}$ is not isomorphic to a smooth blow-down and the image is singular along $L_{i}$. 
\end{proof}

\begin{remark}\label{rem:Igusaquartic}
In the literature, $\Vat$ has had several alternative descriptions. First of all, note that $\Vat \subset (\PP^{2})^{6}\git SL_{3}$ is the closure of the locus of configurations of six points on smooth conics. In \cite[p. 17, Example 3]{DO88} (also see \cite[Example 11.7]{Dol03}), it was proved that $(\PP^{2})^{6}\git SL_{3}$ is a double cover of $\PP^{4}$ which is ramified over a quartic hypersurface so called \textbf{Igusa quartic} (or \textbf{Castelnuovo-Richmond quartic}) $\cI_{4}$. It is defined by 
\[
	\sum_{i=1}^{6} X_{i} = 0, \quad 
	(\sum_{i=1}^{6}X_{i}^{2})^{2} - 4(\sum_{i=1}^{6}X_{i}^{4}) = 0
\]
in $\PP^{5}$. $\cI_{4}$ is exactly the locus of configurations on conics, thus $\Vat$ is isomorphic to $\cI_{4}$. On the other hand, $\cI_{4}$ is the Satake compactification $\overline{\cA}_{2}(2)$ of moduli space $\cA_{2}(2)$ of principal polarized abelian surfaces with level two structure \cite{Igu64}. To the author's knowledge, there have been no explicit constructions of a regular map from $\Mzs$ to $\overline{\cA}_{2}(2)$. 
\end{remark}

\begin{remark}\label{rem:normal}
From Proposition \ref{prop:geometryofquotient}, $\Vat$ is regular in codimension one. Since we can regard it as a complete intersection in $\PP^{5}$, it is Cohen-Macaulay, in particular, it has $S_{2}$-property. Thus by Serre's criterion, $\Vat$ is normal. 
\end{remark}

\section{Proof of the main theorem}\label{sec:proofmaththm}

In this section, we run Mori's program for $\Mzs$ with symmetric divisors.

\subsection{Stable base locus decomposition}
For an effective divisor $D$, the stable base locus $\mathbf{B}(D)$ is defined as
\[
	\mathbf{B}(D) = \bigcap_{m \ge 0}\mathrm{Bs}(mD),
\]
where $\mathrm{Bs}(D)$ is the set-theoretical base locus of $D$. As a first step toward Mori's program, we will compute the stable base locus decomposition of $\Mzs$, which dictates the different of birational models. 

\begin{proposition}\label{prop:stablebaselocus}
Let $D$ be a symmetric effective divisor on $\Mzs$. Then:
\begin{enumerate}
	\item If $D \in [-K_{\Mzs}, K_{\Mzs}+\frac{1}{3}\psi]$, 
	$D$ is semi-ample. 
	\item If $D \in (K_{\Mzs} + \frac{1}{3}\psi, B_{3}]$, 
	$\mathbf{B}(D) = B_{3}$
	\item If $D \in [B_{2}, -K_{\Mzs})$, $\mathbf{B}(D) = B_{2}$. 
\end{enumerate}
\end{proposition}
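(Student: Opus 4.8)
The plan is to establish a stable base locus decomposition of the two-dimensional symmetric effective cone $\mathrm{Eff}(\Mzs)^{S_{6}}$, which is spanned by $B_{2}$ and $B_{3}$, by exploiting the two extremal contractions $\varphi_{A} : \Mzs \to \Vat$ (contracting an irreducible component of $B_{2}$, studied in Section \ref{sec:explicitcomputation}) and $\rho : \Mzs \to (\PP^{1})^{6}\git SL_{2}$ (contracting a component of $B_{3}$, Example \ref{ex:Kapranovmorphism}). Since the cone is two-dimensional, a divisor's behavior is governed by which chamber of the cone it lands in, with the three rays $B_{2}$, $-K_{\Mzs}$ (equivalently $K_{\Mzs}+\frac{1}{3}\psi$, since these are proportional to the two pullback classes), $K_{\Mzs}+\frac{1}{3}\psi$, and $B_{3}$ cutting the cone into the three ranges in the statement. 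I would first identify, using Corollary \ref{cor:intersection} and Lemma \ref{lem:canpolarization}, that $D_{A} = \varphi_{A}^{*}\overline{L}_{A}$ is proportional to $-K_{\Mzs}$ (it meets $F_{1,1,2,2}$ in $0$ and $F_{1,1,1,3}$ positively, matching $-K_{\Mzs}$ up to scale), and that $\rho^{*}L$ is proportional to $K_{\Mzs}+\frac{1}{3}\psi$.

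For item (1), the interval $[-K_{\Mzs}, K_{\Mzs}+\frac{1}{3}\psi]$ is exactly the cone spanned by the two semi-ample pullback classes $D_{A}$ and $\rho^{*}L$, so every divisor in this closed interval is a nonnegative combination of two semi-ample classes and hence semi-ample; this is the easy part. For item (2), I would argue that any $D \in (K_{\Mzs}+\frac{1}{3}\psi, B_{3}]$ has $\mathbf{B}(D) = B_{3}$. The containment $\mathbf{B}(D) \subseteq B_{3}$ follows because writing $D = a\,\rho^{*}L + b\,B_{3}$ with $a \ge 0$, $b > 0$ and noting $\rho^{*}L$ is base-point free, any base locus of $D$ must lie in the base locus of $B_{3}$, which is contained in $B_{3}$ itself (the only boundary divisor with a section vanishing). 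For the reverse containment $B_{3} \subseteq \mathbf{B}(D)$, I would intersect with the curve $F_{1,1,1,3}$, which lies in $B_{3}$ and satisfies $F_{1,1,1,3}\cdot D < 0$ for $D$ strictly beyond the ray $K_{\Mzs}+\frac{1}{3}\psi = \rho^{*}L$ (since $F_{1,1,1,3}\cdot\rho^{*}L = 0$ and $F_{1,1,1,3}\cdot B_{3} = -1$): a curve with negative intersection against $D$ must sit in the stable base locus, and since these F-curves sweep out $B_{3}$, we get $B_{3} \subseteq \mathbf{B}(D)$. Item (3) is symmetric, using $F_{1,1,2,2}$ (with $F_{1,1,2,2}\cdot D_{A} = 0$ and $F_{1,1,2,2}\cdot B_{2} = -1$) to force $B_{2} \subseteq \mathbf{B}(D)$ when $D$ lies strictly beyond the $-K_{\Mzs}$ ray, and using semi-ampleness of $D_{A}$ to confine the base locus to $B_{2}$.

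The main obstacle I expect is the reverse inclusion in items (2) and (3), namely showing the \emph{entire} divisor $B_{3}$ (resp. $B_{2}$), not merely the sweeping F-curves, lies in the stable base locus. The negative-intersection argument shows that a dense collection of curves lies in $\mathbf{B}(D)$, and I would upgrade this to the whole divisor by the standard fact that the stable base locus is Zariski closed together with the observation that the F-curves of the relevant type cover a dense subset of the corresponding boundary divisor; alternatively, one identifies $\mathbf{B}(D)$ with the exceptional locus of the contraction on the opposite face. A secondary subtlety is confirming that $\mathbf{B}(D)$ contains \emph{nothing more} than $B_{3}$ (resp. $B_{2}$): here I would lean on the decomposition $D = a(\text{semi-ample}) + b(\text{boundary})$ and the fact that the base locus of a boundary divisor on $\Mzs$ is exactly its support, so no interior point of the moduli space enters the stable base locus.
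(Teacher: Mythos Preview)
Your approach matches the paper's almost exactly, but there is one genuine gap in item (3). You propose to use the F-curve $F_{1,1,2,2}$ to force $B_{2}\subset\mathbf{B}(D)$ via the ``negative intersection plus sweeping'' argument. The numerical part is fine ($F_{1,1,2,2}\cdot D<0$ for $D\in[B_{2},-K_{\Mzs})$), but the sweeping claim fails: a general point of an $F_{1,1,2,2}$ curve is a chain of \emph{three} components (the spine carries two marked points and two nodes, each node attached to a tail with two marked points), so every $F_{1,1,2,2}$ curve lives in a codimension-two boundary stratum, not in a dense open of $B_{2}$. Hence the closure of the union of such F-curves is strictly smaller than $B_{2}$, and your argument only yields $\mathbf{B}(D)\supset \bigcup F_{1,1,2,2}$, not $\mathbf{B}(D)\supset B_{2}$.

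The paper fixes this by using the curve class $C_{4}$ from Section~\ref{sec:divcurve} instead: $C_{4}$ is obtained by gluing a fixed $3$-pointed $\PP^{1}$ to a moving point on a fixed $4$-pointed $\PP^{1}$, so its general member has exactly two components and is a general point of (a component of) $B_{2}$. One checks $C_{4}\cdot D<0$ for $D$ in the interval (Corollary~\ref{cor:intersection} gives $C_{4}\cdot B_{2}=-2$, $C_{4}\cdot(-K_{\Mzs})=0$), and since the $C_{4}$ curves genuinely cover a dense open subset of $B_{2}$, the closure argument goes through. Note that $C_{4}$ is numerically proportional to $F_{1,1,2,2}$ (indeed $C_{4}\equiv 2F_{1,1,2,2}$), so the intersection numbers coincide up to scale; the distinction is purely geometric. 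Your item (2) is fine as written, because a general member of $F_{1,1,1,3}$ has only two components and such curves do cover $B_{3}$.
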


\begin{proof}
For $n = 6$, it is well-known that a divisor $D$ on $\Mzs$ is nef if and only if $D\cdot F \ge 0$ for all F-curves (\cite[Theorem 1.2]{KM96}). From Corollary \ref{cor:intersection}, it is straightforward to check that $\mathrm{Nef}(\Mzs)$ is generated by $-K_{\Mzs}$ and $K_{\Mzs} + \frac{1}{3}\psi$. Thus for item (1), it is sufficient to show that $-K_{\Mzs}$ and $K_{\Mzs} + \frac{1}{3}\psi$ are semi-ample. It is a direct consequence of the fact that $\Mzs$ is a Mori dream space (\cite{Cas09}), but in this case furthermore we can write these divisors as pull-backs of ample divisors. $K_{\Mzs} + \frac{1}{3}\psi$ is a scalar multiple of the pull-back of an ample line bundle on $(\PP^{1})^{6}\git SL_{2}$ by Example \ref{ex:Kapranovmorphism}. From Lemma \ref{lem:canpolarization}, $-K_{\Mzs}$ is a scalar multiple of a semi-ample divisor $D_{A}$. So both of them are semi-ample.

If $D \in (K_{\Mzs}+\frac{1}{3}\psi, B_{3}]$, since $K_{\Mzs}+\frac{1}{3}\psi$ is semi-ample, $\mathbf{B}(D) \subset B_{3}$. On the other hand, by Corollary \ref{cor:intersection}, $F_{1,1,1,3}\cdot D < 0$ thus $F_{1,1,1,3} \subset \mathbf{B}(D)$. But $F_{1,1,1,3}$ covers an open dense subset of $B_{3}$. So $\mathbf{B}(D) = B_{3}$. 

Finally, if $D \in [B_{2}, -K_{\Mzs})$, $\mathbf{B}(D) \subset B_{2}$ because $-K_{\Mzs}$ is semi-ample. $C_{4} \cdot D < 0$ implies $C_{4} \subset \mathbf{B}(D)$. Since $C_{4}$ covers an open dense subset of $B_{2}$, $\mathbf{B}(D) = B_{2}$.
\end{proof}

We can summarize the above result by Figure \ref{fig:stablebaselocus}. 

\begin{figure}[!ht]
\begin{tikzpicture}[scale=0.6]
	\draw[->][line width=1.5pt][lightgray] (5, 0) -- (10, 0);
	\draw[<-][line width=1.5pt] (0,0) -- (5,0);
	\draw[->][line width=1.5pt][lightgray] (5,0) -- (5,5);
	\draw[->][line width=1.5pt] (5,0) -- (9.8,2);
	\draw[->][line width=1.5pt] (5,0) -- (0.2, -1);
	\draw[->][line width=1.5pt] (5,0) -- (9.6, 3);
	\node at (11,0) {$K_{\Mzs}$};
	\node at (-1.3, 0) {$-K_{\Mzs}$};
	\node at (5,5.5) {$\psi$};
	\node at (10.3, 2) {$B_{3}$};
	\node at (-0.5,-1) {$B_{2}$};
	\node at (11.5, 3.2) {$K_{\Mzs} + \frac{1}{3}\psi$};
	\node at (9, 2.1) {$B_{3}$};
	\node at (1, -0.4) {$B_{2}$};
	\node at (5,3) {$\emptyset$};
\end{tikzpicture}
\caption{Stable base locus decomposition of $\Mzs$}\label{fig:stablebaselocus}
\end{figure}

\subsection{Mori's program for $\Mzs$}

Now we can perform Mori's program of $\Mzs$ for all symmetric divisors. 

\begin{theorem}\label{thm:MoriprogramMzs}
Let $D$ be a symmetric effective divisor on $\Mzs$. Then:
\begin{enumerate}
	\item If $D \in (-K_{\Mzs}, K_{\Mzs}+\frac{1}{3}\psi)$, 
	then $\Mzs(D) \cong \Mzs$. 
	\item If $D \in [K_{\Mzs}+\frac{1}{3}\psi, B_{3})$, 
	then $\Mzs(D) \cong (\PP^{1})^{6}\git SL_{2}$. 
	\item If $D \in (B_{2}, -K_{\Mzs}]$, then $\Mzs(D)$ is 
	$\Vat$ where $A = (\frac{1}{2}, \cdots, 
	\frac{1}{2})$. 
	\item Both $\Mzs(B_{2})$ and $\Mzs(B_{3})$ are a point. 
\end{enumerate}
\end{theorem}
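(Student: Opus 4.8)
The plan is to upgrade the stable base locus decomposition of Proposition \ref{prop:stablebaselocus} (summarized in Figure \ref{fig:stablebaselocus}) into a precise identification of each model, using the two divisorial contractions already constructed: the Kapranov morphism $\rho : \Mzs \to (\PP^{1})^{6}\git SL_{2}$ of Example \ref{ex:Kapranovmorphism} and the Veronese contraction $\varphi_{A} : \Mzs \to \Vat$ of Proposition \ref{prop:geometryofquotient}. The symmetric effective cone splits into exactly three chambers separated by the walls $-K_{\Mzs}$ and $K_{\Mzs}+\frac{1}{3}\psi$, and the reason no flips occur is precisely that both nontrivial contractions are divisorial, with exceptional loci exactly $B_{2}$ and $B_{3}$. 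Everything then reduces to a single section-ring computation for a divisor of the shape ``pullback of ample plus effective exceptional.''

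For item (1), a divisor in the open cone $(-K_{\Mzs}, K_{\Mzs}+\frac{1}{3}\psi)$ lies in the interior of $\mathrm{Nef}(\Mzs)$ (computed in the proof of Proposition \ref{prop:stablebaselocus}), hence is ample; since $\Mzs$ is smooth and projective, $\Mzs(D)\cong\Mzs$. The other items all follow from one lemma I would isolate first: if $\pi : \Mzs \to Y$ is either of the two contractions, $H$ is an ample $\QQ$-divisor on the normal target $Y$, and $E\ge 0$ is an effective $\pi$-exceptional divisor, then $\Mzs(\pi^{*}H + E)\cong Y$, while $\Mzs(E)$ is a point. The proof is the projection-formula identity $H^{0}(\Mzs, m\pi^{*}H + mE) = H^{0}(Y, mH)$, valid because $\pi_{*}\cO_{\Mzs}(mE)=\cO_{Y}$: the inclusion $\cO_{Y}\subseteq \pi_{*}\cO(mE)$ is clear, and the reverse follows from normality of $Y$ (Remark \ref{rem:normal} for $\Vat$; GIT quotients are normal) together with the fact that $\pi$ contracts $E$ into a locus of codimension $\ge 2$ (Proposition \ref{prop:geometryofquotient} and Example \ref{ex:Kapranovmorphism}), so a rational function regular off $E$ extends across it by Hartogs. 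Taking $\proj$ of $\bigoplus_{m}H^{0}(Y,mH)$ then recovers $Y$ when $H$ is ample and a point when $H=0$.

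To finish I would feed the chamber decomposition into this lemma. For item (2), write $D\in[K_{\Mzs}+\frac{1}{3}\psi, B_{3})$ as $D = \alpha(K_{\Mzs}+\frac{1}{3}\psi) + \beta B_{3}$ with $\alpha>0$, $\beta\ge 0$; since $K_{\Mzs}+\frac{1}{3}\psi$ is a positive multiple of $\rho^{*}L$ for an ample $L$ (Example \ref{ex:Kapranovmorphism}) and $\beta B_{3}$ is effective and $\rho$-exceptional, the lemma gives $\Mzs(D)\cong (\PP^{1})^{6}\git SL_{2}$. For item (3), the identical argument with $\varphi_{A}$ in place of $\rho$, writing $D\in(B_{2},-K_{\Mzs}]$ as $\alpha(-K_{\Mzs})+\beta B_{2}$ with $\alpha>0,\beta\ge 0$ and using that $-K_{\Mzs}$ is a positive multiple of $\varphi_{A}^{*}\overline{L}_{A}$ (Lemma \ref{lem:canpolarization}) with $B_{2}$ effective and $\varphi_{A}$-exceptional (Proposition \ref{prop:geometryofquotient}), gives $\Mzs(D)\cong\Vat$. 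For item (4) one takes $H=0$: $B_{3}$ is effective exceptional for $\rho$ and $B_{2}$ for $\varphi_{A}$, so both $\Mzs(B_{3})$ and $\Mzs(B_{2})$ are points.

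The one step that carries real content is the extension claim $\pi_{*}\cO(mE)=\cO_{Y}$, since this is exactly what forces the model to stay constant across an entire chamber (including its open interior, not just the wall) rather than jumping via a flip. This is where the proof must genuinely invoke the normality of both targets and the codimension-two-image nature of the two contractions established in Section \ref{sec:explicitcomputation}; granting those inputs, the remaining work is bookkeeping against Figure \ref{fig:stablebaselocus}.
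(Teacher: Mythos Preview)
Your proof is correct and follows essentially the same route as the paper: both use the ample chamber for item (1), the identity $H^{0}(\Mzs, m(\pi^{*}H+E)) \cong H^{0}(Y, mH)$ for a birational contraction $\pi$ with effective exceptional $E$ to handle items (2)--(4), and the normality of $\Vat$ (Remark \ref{rem:normal}) to conclude in item (3). The only difference is cosmetic: the paper cites \cite[Lemma 7.11]{Deb01} for the section-ring identity, whereas you unpack it via $\pi_{*}\cO(mE)=\cO_{Y}$ using normality and Hartogs, which is exactly the content of that lemma.
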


\begin{proof}
If $D \in (-K_{\Mzs}, K_{\Mzs} + \frac{1}{3}\psi)$ it is ample by Proposition \ref{prop:stablebaselocus}. Thus $\Mzs(D) \cong \Mzs$. 

If $D \in [K_{\Mzs} + \frac{1}{3}\psi, B_{3})$, then $D = K_{\Mzs} + \frac{1}{3}\psi + aB_{3}$ for some $a > 0$. By taking a sufficiently large multiple, we may assume that $D$ is a linear combination of integral divisors. Note that $K_{\Mzs}+\frac{1}{3}\psi$ is $\rho^{*}L$ for $\rho : \Mzs \to (\PP^{1})^{6}\git SL_{2}$ of an ample line bundle $L$ (Example \ref{ex:Kapranovmorphism}), and $B_{3}$ is the exceptional locus of $\rho$. Thus by \cite[Lemma 7.11]{Deb01}, 
\begin{eqnarray*}
	H^{0}(\Mzs, \cO(mD)) & = & 
	H^{0}(\Mzs, \cO(m(K_{\Mzs}+\frac{1}{3}\psi+aB_{3}))) \\
	&\cong & H^{0}((\PP^{1})^{6}\git SL_{2}, L^{m}).
\end{eqnarray*}
Therefore
\[
	\Mzs(D) \cong \proj\bigoplus_{m \ge 0}H^{0}(\Mzs, \cO(mD)) \cong
	\proj \bigoplus_{m \ge 0}
	H^{0}((\PP^{1})^{6}\git SL_{2}, L^{m})
	\cong (\PP^{1})^{6}\git SL_{2}.
\]

Similarly, if $D \in (B_{2}, -K_{\Mzs}]$, then $D = -K_{\Mzs} + bB_{2}$ for some $b > 0$ and $B_{2}$ is the exceptional locus of $\varphi_{A} : \Mzs \to \Vat$. Also $-K_{\Mzs}$ is a pull-back of an ample line bundle $M$ on $\Vat$. By the same argument, 
\[
	\Mzs(D) \cong \proj \bigoplus_{m \ge 0}
	H^{0}(\Vat, M^{m}),
\]
which is the normalization of $\Vat$. But $\Vat$ is normal by 
Remark \ref{rem:normal} so it is isomorphic to its normalization.

The last assertion is obvious since both $B_{2}, B_{3}$ are fixed divisors. 
\end{proof}

\begin{remark}
Therefore in symmetric Mori's program on $\Mzs$, there is no flip. In Mori's full program on $\Mzs$ there must be many flips because we can construct many modular small contractions of $\Mzs$ by using a general construction described in \cite{GJM13}. 

On the other hand, for $n \ge 7$, even for symmetric Mori's program many flips should appear. 
\end{remark}

\begin{remark}
For divisors of the form $K_{\Mzn} + \alpha B$ with $\alpha \le 1$, Mori's program can be regarded as an analogy with the Hassett-Keel program for $\cMg$. In this direction, Mori's program is done by many authors \cite{Sim08, AS08, FS11, KM11} for all $n$. When $n = 6$ it covers half of the symmetric effective cone. 

For the subcone of effective cone generated by $K_{\Mzn}$ and $\psi_{i}$-classes has been studied intensively. There is a general picture for non-symmetric weight data and even for higher genera cases. Consult \cite{Moo11b}. 
\end{remark}

\begin{remark}\label{rem:Segrecubic}
It is well-known that $(\PP^{1})^{6}\git SL_{2}$ is isomorphic to \textbf{Segre cubic} $\cS_{3}$, which is defined by
\[
	\sum_{i=1}^{6}X_{i} = 0, \quad \sum_{i=1}^{6}X_{i}^{3} = 0
\]
in $\PP^{5}$ (\cite[Example 11.6]{Dol03}). One fascinating fact is that Segre cubic is projectively dual to Igusa quartic $\cI_{4}$ (\cite[Remark I.3]{DO88}), as a hypersurface in $\PP^{4}$. It would be interesting if this projective dual map can be described concretely in terms of $\Mzs$. 
\end{remark}

\section{Mori's program for $\cMt$}\label{sec:MoriprogramcMt}

Let $\cMt$ be the moduli stack of genus two stable curves. Essentially all birational contractions of $\cMt$ were described in \cite{Has05}, even though Mori's program was described for only one half of the effective cone of $\cMt$. Indeed, since all genus two smooth curves are hyperelliptic, the coarse moduli space $\Mt$ is isomorphic to $\Mzs/S_{6}$ (\cite[Corollary 2.5]{AL02}). Thus Theorem \ref{thm:MoriprogramMzs} gives Mori's full program for $\Mt$. Also since $\mathrm{Pic}(\Mt)_{\QQ} \cong \mathrm{Pic}(\cMt)_{\QQ}$, this result can be regarded as Mori's program of $\cMt$. 

First of all, we have natural isomorphisms
\[
	\mathrm{Pic}(\cMt)_{\QQ} \cong
	\mathrm{Pic}(\Mt)_{\QQ}
	\cong \mathrm{Pic}(\Mzs)_{\QQ}^{S_{6}}.
\]
For the coarse moduli map $q : \cMt \to \Mt$, we have the first isomorphism $q^{*}: \mathrm{Pic}(\Mt)_{\QQ} \to \mathrm{Pic}(\cMt)_{\QQ}$. This is an isomorphism only if we take $\QQ$-Picard groups. The second isomorphism comes from $\pi^{*}: \mathrm{Pic}(\Mt) \to \mathrm{Pic}(\Mzs)^{S_{6}}$ where $\pi : \Mzs \to \Mt$ is the $S_{6}$-quotient map. By following the notations in \cite{Has05}, we denote by $\delta_{0}$ (resp. $\delta_{1}$) the boundary divisor of irreducible nodal curves (resp. that of union of two elliptic curves respectively) on $\cMt$. Let $\Delta_{0}, \Delta_{1}$ be corresponding boundary divisors on the coarse moduli space $\Mt$. Let $\lambda$ be Hodge class on $\Mt$ also its pull-back on $\cMt$. The effective cone of $\cMt$ (resp. $\Mt$) is generated by $\delta_{0}$ and $\delta_{1}$ (resp. $\Delta_{0}$ and $\Delta_{1}$). Since $q : \cMt \to \Mt$ is ramified along $\Delta_{1}$, $q^{*}(\Delta_{0}) = \delta_{0}$, $q^{*}(\Delta_{1}) = 2\delta_{1}$. 

The following simple lemma says about the relation between $S_{6}$-symmetric Mori's program of $\Mzs$ and Mori's program of $\Mt$. For a projective variety $X$ and a divisor $D$, let
\[
	R(X, D) := \bigoplus_{m \ge 0}H^{0}(X, \cO(mD))
\]
be the section ring. 

\begin{lemma}
Let $G$ be a finite group acting on a projective variety $X$. And let $\pi : X \to X/G$ be the quotient map and $D$ be a $\QQ$-Cartier divisor on $X/G$. Assume that $R(X, \pi^{*}D)$ is finitely generated and $Y := \proj R(X, \pi^{*}D)$. Then $R(X/G, D)$ is finitely generated and $\proj R(X/G, D) = Y/G$. 
\end{lemma}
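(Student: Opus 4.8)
The plan is to realize $R(X/G, D)$ as the ring of $G$-invariants of $R := R(X, \pi^{*}D)$, and then to invoke two standard inputs: finiteness of invariants of a finitely generated algebra under a finite group, and the identification of $\proj$ of an invariant ring with the quotient. Since $D$ lives on $X/G$, its pullback $\pi^{*}D$ is $G$-invariant, so $G$ acts on each $H^{0}(X, \cO(m\pi^{*}D))$ and hence on the graded ring $R$ by graded $k$-algebra automorphisms.

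First I would establish the degree-by-degree identity
\[
	H^{0}(X/G, \cO(mD)) = H^{0}(X, \cO(m\pi^{*}D))^{G}
	\quad \text{for all } m \ge 0 ,
\]
so that $R(X/G, D) = R^{G}$ as graded rings. Because $k(X)^{G} = k(X/G)$, a $G$-invariant section of $\cO(m\pi^{*}D)$ is the same datum as a rational function $f \in k(X/G)$ with $\mathrm{div}_{X}(f) + \lfloor m\pi^{*}D\rfloor \ge 0$. For such $f$ one has $\mathrm{div}_{X}(f) = \pi^{*}\mathrm{div}_{X/G}(f)$, and if $e_{B}$ denotes the ramification index of $\pi$ along a prime divisor $B$ of $X/G$ with $D = \sum_{B} d_{B} B$, the condition reads $e_{B}\,\mathrm{ord}_{B}(f) + \lfloor m e_{B} d_{B}\rfloor \ge 0$ for every $B$. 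The floor identity $\lfloor \lfloor m e_{B} d_{B}\rfloor / e_{B}\rfloor = \lfloor m d_{B}\rfloor$ shows this is equivalent to $\mathrm{ord}_{B}(f) + \lfloor m d_{B}\rfloor \ge 0$, i.e. to $f \in H^{0}(X/G, \cO(mD))$. (Alternatively, after replacing $D$ by a Cartier multiple $ND$ and passing to Veronese subrings, the identity follows at once from the projection formula together with $(\pi_{*}\cO_{X})^{G} = \cO_{X/G}$.)

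Given this, finiteness is immediate. The ring $R$ is a finitely generated, hence Noetherian, $k$-algebra; every element $r \in R$ is a root of the monic polynomial $\prod_{g \in G}(T - g\cdot r)$ with coefficients in $R^{G}$, so $R$ is integral over $R^{G}$; and we work in characteristic $0$, where the finite group $G$ is linearly reductive. By the classical Hilbert--Noether finiteness theorem for invariants, $R^{G} = R(X/G, D)$ is a finitely generated $k$-algebra.

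The last step, and the one requiring the most care, is the identification $\proj R^{G} = Y/G$. After replacing $R$ by a Veronese subring I may assume it is generated in degree one, so $Y = \proj R$ carries a $G$-linearized very ample $\cO_{Y}(1)$ and $R$ agrees with $\bigoplus_{m} H^{0}(Y, \cO_{Y}(m))$ in all large degrees. Since $G$ is finite the geometric quotient $q : Y \to Y/G$ exists as a projective variety, a sufficiently divisible power $\cO_{Y}(\ell)$ (e.g. $\ell = |G|$) descends to an ample line bundle $M$ on $Y/G$ by Kempf's descent lemma, and $q_{*}(-)^{G}$ gives $H^{0}(Y/G, M^{k}) = H^{0}(Y, \cO_{Y}(k\ell))^{G} = (R^{G})_{k\ell}$ for $k \gg 0$. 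Hence $Y/G = \proj (R^{G})^{(\ell)} = \proj R^{G}$; equivalently, for a finite group acting on $\proj R$ by graded automorphisms the GIT quotient is geometric and equals $\proj R^{G}$. Combining with the first step yields $\proj R(X/G, D) = \proj R^{G} = Y/G$, as desired. The delicate point is precisely this descent of the polarization through the finite quotient; the round-down compatibility of the previous paragraph, while easy to overlook, works out exactly once the floor identity is used and poses no genuine difficulty.
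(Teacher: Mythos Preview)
Your argument is correct and follows the same three-step outline as the paper: identify $R(X/G,D)$ with $R(X,\pi^{*}D)^{G}$, invoke finite generation of invariants under a finite group, and conclude $\proj R^{G}=Y/G$. The paper is much terser---it immediately replaces $D$ by a Cartier multiple (your parenthetical ``alternatively'' is exactly its route) and simply cites ``the definition of projective quotient'' for the last step---so your floor-function computation and descent discussion are more than what is needed, but they are not wrong and the overall strategy is the same.
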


\begin{proof}
We may assume that $D$ is a Cartier divisor, because $\proj R(X, D)$ does not change after replacing $D$ by $kD$. Note that $H^{0}(X/G, \cO(mD)) \cong H^{0}(X, \pi^{*}\cO(mD))^{G}$. Thus 
\[
	R(X/G, D) = \bigoplus_{m \ge 0}H^{0}(X/G, \cO(mD))
	\cong \bigoplus_{m \ge 0}H^{0}(X, \pi^{*}\cO(mD))^{G}
	= R(X, \pi^{*}D)^{G}.
\]
Since $G$ is a finite group and $R(X, \pi^{*}D)$ is finitely generated, the $G$-invariant subring is finitely generated, too. By the definition of projective quotient, $\proj R(X/G, D) \cong \proj R(X, \pi^{*}D)^{G} = Y/G$. 
\end{proof}

\begin{theorem}\label{thm:MoriprogramMtwo}
Let $D$ be an effective divisor on $\Mt$. Then:
\begin{enumerate}
	\item If $D \in (\lambda, \Delta_{0}+6\Delta_{1})$, 
	$\Mt(D) \cong \Mt$. 
	\item If $D \in [\Delta_{0}+6\Delta_{1}, \Delta_{1})$, 
	$\Mt(D) \cong \PP^{6}\git SL_{2}$. 
	\item If $D \in (\Delta_{0}, \lambda]$, $\Mt(D) \cong 
	\overline{\cA}_{2}^{\mathrm{Sat}}$, Satake compactification of 
	$\cA_{2}$. 
	\item Both $\Mt(\Delta_{0})$ and $\Mt(\Delta_{1})$ are a point.
\end{enumerate}
\end{theorem}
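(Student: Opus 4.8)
The plan is to reduce everything to the symmetric Mori's program for $\Mzs$ (Theorem \ref{thm:MoriprogramMzs}) through the identification $\Mt \cong \Mzs/S_{6}$ and the quotient lemma just proved. Let $\pi : \Mzs \to \Mt$ be the $S_{6}$-quotient. Since $\Mzs$ is smooth, $\Mt$ is $\QQ$-factorial, so any effective $D$ is $\QQ$-Cartier; its pullback $\pi^{*}D$ is a symmetric effective divisor with finitely generated section ring (because $\Mzs$ is a Mori dream space, as used in Theorem \ref{thm:MoriprogramMzs}), so the lemma yields
\[
	\Mt(D) \cong \proj R(\Mt, D) \cong \proj R(\Mzs, \pi^{*}D)^{S_{6}} \cong \Mzs(\pi^{*}D)/S_{6}.
\]
Thus it remains to (i) translate the walls $\lambda$, $\Delta_{0}+6\Delta_{1}$, $\Delta_{1}$, $\Delta_{0}$ on $\Mt$ into symmetric classes on $\Mzs$ under $\pi^{*}$, and (ii) identify the $S_{6}$-quotients of the three models of Theorem \ref{thm:MoriprogramMzs}.

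For (i) I would set up the dictionary ray by ray. Under the hyperelliptic description, an irreducible one-nodal genus two curve ($\Delta_{0}$) corresponds to two of the six branch points colliding, i.e.\ to $B_{2}$, while two elliptic curves meeting at a point ($\Delta_{1}$) corresponds to a $3+3$ split of the branch points, i.e.\ to $B_{3}$. As $\pi^{*}$ is a linear isomorphism $N^{1}(\Mt)_{\QQ} \cong N^{1}(\Mzs)^{S_{6}}_{\QQ}$ carrying the effective cone $\langle \Delta_{0}, \Delta_{1}\rangle$ onto $\langle B_{2}, B_{3}\rangle$, it sends extremal rays to extremal rays, so $\pi^{*}\Delta_{0} \in \RR_{>0}B_{2}$ and $\pi^{*}\Delta_{1}\in \RR_{>0}B_{3}$. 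The exact coefficients come from the ramification of $\pi$: the generic point of $B_{2}$ has inertia $\ZZ/2$ generated by the involution swapping the two points on the rational bubble, which acts by $-1$ on the normal direction, whereas the generic point of $B_{3}$ has trivial stabilizer. Hence $\pi^{*}\Delta_{0} = 2B_{2}$ and $\pi^{*}\Delta_{1} = B_{3}$. Feeding this into Mumford's relation $10\lambda = \delta_{0}+2\delta_{1}$, which descends to $10\lambda = \Delta_{0}+\Delta_{1}$ on $\Mt$ via $q^{*}\Delta_{1}=2\delta_{1}$, together with the corollary expressing $K_{\Mzs}$ and $\psi$ in terms of $B_{2}, B_{3}$, gives $\pi^{*}\lambda = -\tfrac{1}{2}K_{\Mzs}$ and $\pi^{*}(\Delta_{0}+6\Delta_{1})\in \RR_{>0}(K_{\Mzs}+\tfrac{1}{3}\psi)$. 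So $\pi^{*}$ matches the four walls $\lambda$, $\Delta_{0}+6\Delta_{1}$, $\Delta_{1}$, $\Delta_{0}$ with $-K_{\Mzs}$, $K_{\Mzs}+\tfrac{1}{3}\psi$, $B_{3}$, $B_{2}$, and the three intervals of the theorem with the three cases of Theorem \ref{thm:MoriprogramMzs}.

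For (ii) I would compute the quotients directly. In the ample range the model is $\Mzs$ itself, with quotient $\Mt$. On the $B_{3}$-side, $\Mzs(\pi^{*}D)\cong(\PP^{1})^{6}\git SL_{2}$, and since the $S_{6}$- and $SL_{2}$-actions commute and the symmetric linearization descends, $((\PP^{1})^{6}\git SL_{2})/S_{6} \cong (\PP^{1})^{6}\git(SL_{2}\times S_{6}) \cong (\sym^{6}\PP^{1})\git SL_{2} \cong \PP^{6}\git SL_{2}$, the classical GIT quotient of binary sextics. On the $B_{2}$-side, $\Mzs(\pi^{*}D)\cong\Vat\cong\cI_{4}=\overline{\cA}_{2}(2)$, and using the classical isomorphism $S_{6}\cong\mathrm{Sp}_{4}(\mathbb{F}_{2})$ acting as the group of level-two structures, the quotient is $\overline{\cA}_{2}^{\mathrm{Sat}}$. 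Finally the walls $\Delta_{0}, \Delta_{1}$ map to the fixed divisors $B_{2}, B_{3}$, whose models are points, so their quotients are points. Combining (i) and (ii) with the displayed reduction proves all four statements.

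The hard part will be (i), in particular verifying the ramification coefficients $\pi^{*}\Delta_{0}=2B_{2}$ and $\pi^{*}\Delta_{1}=B_{3}$: the factor $2$ along $B_{2}$ is precisely what places $\pi^{*}\lambda$ on the $-K_{\Mzs}$ ray and $\pi^{*}(\Delta_{0}+6\Delta_{1})$ on the $K_{\Mzs}+\tfrac{1}{3}\psi$ ray, so a single index error would misalign every chamber. The identifications in (ii) are classical, but I would still want to check that the polarizations, and not merely the underlying varieties, correspond under each quotient, so that the open/closed interval endpoints are faithfully preserved.
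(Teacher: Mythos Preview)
Your proposal is correct and follows essentially the same line as the paper: compute the pullbacks $\pi^{*}\Delta_{0}=2B_{2}$, $\pi^{*}\Delta_{1}=B_{3}$ via the generic stabilizer on $B_{2}$, deduce $\pi^{*}\lambda=-\tfrac{1}{2}K_{\Mzs}$ and $\pi^{*}(\Delta_{0}+6\Delta_{1})\in\RR_{>0}(K_{\Mzs}+\tfrac{1}{3}\psi)$, and then use the quotient lemma to read off each model as the $S_{6}$-quotient of the corresponding model in Theorem~\ref{thm:MoriprogramMzs}. The one place you diverge is the identification on the $(\Delta_{0},\lambda]$ chamber: you compute $\Vat/S_{6}$ directly via the classical chain $\Vat\cong\cI_{4}\cong\overline{\cA}_{2}(2)$ together with $S_{6}\cong\mathrm{Sp}_{4}(\mathbb{F}_{2})$, whereas the paper sidesteps this by invoking the known equality $\Mt(\lambda)=\overline{\cA}_{2}^{\mathrm{Sat}}$ (the section ring of $\lambda$ is the ring of Siegel modular forms) and then noting that the model is constant on the chamber. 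Your route is more explicit and makes the $S_{6}$-action visible, at the cost of having to check that the permutation action on $\Vat$ really matches the $\mathrm{Sp}_{4}(\mathbb{F}_{2})$-action under the Igusa isomorphism; the paper's route avoids that compatibility check entirely.
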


\begin{proof}
First of all, for the quotient map $\pi : \Mzs \to \Mt$, set theoretically $\pi^{-1}(\Delta_{0}) = B_{2}$ and $\pi^{-1}(\Delta_{1}) = B_{3}$. Note that $S_{6}$ acts freely on a general point of $\Mzs - (B_{2} \cup B_{3})$ and a general point of $B_{3}$, but there is an order two stabilizer on a general point $B_{2}$, which changes marked points on the irreducible component with two marked points. Thus $\pi^{*}(\Delta_{0}) = 2B_{2}$ and $\pi^{*}(\Delta_{1}) = B_{3}$. 

Since $\lambda \equiv \frac{1}{10}(\Delta_{0}+\Delta_{1})$ (\cite[Exercise 3.143]{HM98}), $\pi^{*}\lambda = \frac{1}{5}B_{2}+\frac{1}{10}B_{3} = -\frac{1}{2}K_{\Mzs}$. Also $\pi^{*}(\Delta_{0}+6\Delta_{1}) = 2B_{2}+6B_{3}= 15(K_{\Mzs}+\frac{1}{3}\psi)$. Thus the decomposition of the effective cone of $\Mt$ on the statement is exactly the image of the stable base locus decomposition of $\Mzs$. So we obtain the result from Theorem \ref{thm:MoriprogramMzs}. For example, for $D \in (\lambda, \Delta_{0}+6\Delta_{1})$, $\Mt(D) = \Mzs(\pi^{*}D)/S_{6} = \Mzs/S_{6}=\Mt$. For $D \in [\Delta_{0}+6\Delta_{1}, \Delta_{1})$, $\Mt(D) = \Mzs(\pi^{*}D)/S_{6} = (\PP^{1})^{6}\git SL_{2}/S_{6} = (\PP^{1})^{6}/S_{6}\git SL_{2} = \PP^{6}\git SL_{2}$ since $SL_{2}$-action and $S_{6}$-action commute. Finally, we already know that $\Mt(\lambda) = \overline{\cA}_{2}^{\mathrm{Sat}}$ from the definition of $\lambda$ class. Now for $D \in (\Delta_{0}, \lambda]$, $\Mt(D) \cong \Mzs(\pi^{*}D)/S_{6}$ is independent from the choice of $D$. Therefore $\Mt(D) \cong \overline{\cA}_{2}^{\mathrm{Sat}}$. 
\end{proof}

Finally, we can obtain an alternative proof of the main theorem of \cite{Has05}, as a restatement of Theorem \ref{thm:MoriprogramMtwo}. Note that 
$q^{*}(\Delta_{0}+6\Delta_{1}) = \delta_{0}+12\delta_{1}$. 

\begin{theorem}\label{thm:MoriprogramcMt}
Let $D$ be an effective divisor on $\cMt$. Then:
\begin{enumerate}
	\item If $D \in (\lambda, \delta_{0}+12\delta_{1})$, 
	$\cMt(D) \cong \Mt$.
	\item If $D \in [\delta_{0}+12\delta_{1}, \delta_{1})$, 
	$\cMt(D) \cong \PP^{6}\git SL_{2}$.
	\item If $D \in (\delta_{0}, \lambda]$, $\cMt(D) \cong 
	\overline{\cA}_{2}^{\mathrm{Sat}}$, Satake compactification of 
	the moduli space of principally polarized abelian surfaces.
	\item Both $\cMt(\delta_{0})$ and $\cMt(\delta_{1})$ are a point.
\end{enumerate}
\end{theorem}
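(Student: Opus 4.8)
The plan is to obtain Theorem \ref{thm:MoriprogramcMt} as a direct translation of the already-established Theorem \ref{thm:MoriprogramMtwo} for the coarse space $\Mt$, using the pullback isomorphism $q^{*}: \mathrm{Pic}(\Mt)_{\QQ} \to \mathrm{Pic}(\cMt)_{\QQ}$ coming from the coarse moduli map $q: \cMt \to \Mt$. The essential point is that the section rings, and hence the projective models $X(D)$, are unaffected by passing between a Deligne-Mumford stack and its coarse moduli space, so that $\cMt(D) \cong \Mt(q_{*}D)$ whenever the two divisor classes correspond under $q^{*}$. Thus the whole theorem reduces to carrying each of the four chambers across the isomorphism $q^{*}$ and recording the images of the distinguished boundary rays.

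The first step is to establish the identity $R(\cMt, D) \cong R(\Mt, D')$ for a divisor $D'$ on $\Mt$ with $q^{*}D' = D$, so that $\proj$ of the two rings coincide. This follows from the fact that for a coarse moduli map the pushforward $q_{*}\cO_{\cMt}(mq^{*}D') \cong \cO_{\Mt}(mD')$ (for $mD'$ Cartier), giving $H^{0}(\cMt, \cO(mD)) \cong H^{0}(\Mt, \cO(mD'))$; this is where the restriction to $\QQ$-divisors is used, since $q^{*}$ is only an isomorphism rationally. Consequently the model $\cMt(D)$ depends only on the ray spanned by $D$ in $\mathrm{Pic}(\cMt)_{\QQ}$, exactly as for $\Mt$.

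The second step is bookkeeping of the boundary classes. Using the stated relations $q^{*}(\Delta_{0}) = \delta_{0}$, $q^{*}(\Delta_{1}) = 2\delta_{1}$, and $q^{*}\lambda = \lambda$, I would verify that the four open chambers of Theorem \ref{thm:MoriprogramMtwo} map precisely to the four chambers in the statement: the ample chamber $(\lambda, \Delta_{0}+6\Delta_{1})$ pulls back to $(\lambda, \delta_{0}+12\delta_{1})$ since $q^{*}(\Delta_{0}+6\Delta_{1}) = \delta_{0}+12\delta_{1}$ (as noted just before the theorem); the chamber $[\Delta_{0}+6\Delta_{1}, \Delta_{1})$ pulls back to $[\delta_{0}+12\delta_{1}, \delta_{1})$; and $(\Delta_{0}, \lambda]$ pulls back to $(\delta_{0}, \lambda]$. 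Matching the endpoints $\delta_{0}, \delta_{1}$ with $\Delta_{0}, \Delta_{1}$ then transfers Item (4) directly.

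With these two steps in place, each item follows by substitution: Item (1) gives $\cMt(D) \cong \Mt(D') \cong \Mt$; Item (2) gives $\PP^{6}\git SL_{2}$; Item (3) gives the Satake compactification $\overline{\cA}_{2}^{\mathrm{Sat}}$; and Item (4) gives a point, in each case reading off the corresponding conclusion of Theorem \ref{thm:MoriprogramMtwo}. The only subtle point, and thus the main thing to get right rather than a genuine obstacle, is the factor-of-two discrepancies coming from ramification: $q$ ramifies along $\Delta_{1}$ so that $q^{*}\Delta_{1} = 2\delta_{1}$, which is exactly what turns the coefficient $6$ in $\Delta_{0}+6\Delta_{1}$ into the coefficient $12$ in $\delta_{0}+12\delta_{1}$. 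Once this normalization is tracked carefully the proof is essentially a one-line invocation of the preceding theorem together with the pullback isomorphism on rational Picard groups.
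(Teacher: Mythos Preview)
Your proposal is correct and matches the paper's approach exactly: the paper presents Theorem \ref{thm:MoriprogramcMt} as an immediate restatement of Theorem \ref{thm:MoriprogramMtwo} via the pullback isomorphism $q^{*}: \mathrm{Pic}(\Mt)_{\QQ} \to \mathrm{Pic}(\cMt)_{\QQ}$, with the single computation $q^{*}(\Delta_{0}+6\Delta_{1}) = \delta_{0}+12\delta_{1}$ singled out as the only thing to check. Your tracking of the ramification factor and the resulting chamber bookkeeping is precisely what is needed.
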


\begin{remark}
\begin{enumerate}
	\item The divisor classes of the form $K_{\cMt}+\alpha \delta$ 
	are more familiar for people interested in Hassett-Keel program. 
	In this setup, 
	Theorem \ref{thm:MoriprogramcMt} is translated as the following: For 
	$2 > \alpha > 9/11$, $\cMt(\alpha) := \cMt(K_{\cMt}+\alpha \delta)
	\cong \Mt$. For $9/11 \ge \alpha > 7/10$, $\cMt(\alpha) \cong 
	\PP^{6}\git SL_{2}$. For $\alpha \ge 2$, $\cMt(\alpha) \cong 
	\overline{\cA}_{2}^{\mathrm{Sat}}$. Finally, $\cMt(7/10)$ is a point. 
	But it does not cover the part $[\delta_{0}, \delta_{0}+\delta_{1}]$ of 
	the effective cone . 
	\item It is well-known that $\PP^{6}\git SL_{2} \cong \PP(2,4,6,10)$ 
	(\cite[Section 10.2]{Dol03}).
\end{enumerate}
\end{remark}


\bibliographystyle{alpha}
\bibliography{Library}

\end{document}